\newtheorem{theorem}{Theorem}
\newtheorem{lemma}[theorem]{Lemma}
\newtheorem{proposition}[theorem]{Proposition}
\newtheorem*{theoremA}{Theorem A}
\newtheorem*{theoremB}{Theorem B}
\newtheorem*{theoremC}{Theorem C}
\theoremstyle{definition}
\newtheorem{definition}{Definition}
\theoremstyle{remark}
\newcommand{\PGL}{{\mathrm {PGL}}}
\newcommand{\SL}{{\mathrm {SL}}}
\newcommand{\PSL}{{\mathrm {PSL}}}
\newcommand{\SU}{{\mathrm {SU}}}
\newcommand{\PSU}{{\mathrm {PSU}}}
\newcommand{\PO}{{\mathrm {P}\Omega}}
\newcommand{\Sp}{{\mathrm {Sp}}}
\newcommand{\PSp}{{\mathrm {PSp}}}
\newcommand{\Aut}{{\mathrm {Aut}}}
\newcommand{\Out}{{\mathrm {Out}}}
\newcommand{\Irr}{{\mathrm {Irr}}}
\newcommand{\St}{{\mathrm {St}}}
\newcommand{\RR}{{\mathbb R}}
\newcommand{\QQ}{{\mathbb Q}}
\newcommand{\FF}{{\mathbb F}}
\newcommand{\ta}{\hspace{0.5mm}^{2}\hspace*{-0.2mm}}
\newcommand{\tb}{\hspace{0.5mm}^{3}\hspace*{-0.2mm}}
\newcommand{\bZ}{\mathbf{Z}}
\newcommand{\bF}{\mathbf{F}}
\newcommand{\Al}{\textup{\textsf{A}}}
\newcommand{\Sy}{\textup{\textsf{S}}}
\newcommand{\rat}{{\mathrm {rat}}}
\newcommand{\Oinfty}{{\mathcal{O}_\infty}}
\begin{document}

\title[Character degree ratios and composition factors]
{Controlling composition factors of a finite group by its character
degree ratio}

\author{James P. Cossey}
\address{Department of Mathematics, The University of Akron, Akron,
Ohio 44325, USA} \email{cossey@uakron.edu}

\author{Hung Ngoc Nguyen}
\address{Department of Mathematics, The University of Akron, Akron,
Ohio 44325, USA} \email{hungnguyen@uakron.edu}

\subjclass[2010]{Primary 20C15}

\keywords{finite groups, character degrees, composition factors}

\date{\today}

\begin{abstract} For a finite nonabelian group $G$ let $\rat(G)$ be the largest ratio of degrees of two
nonlinear irreducible characters of $G$. We show that nonabelian
composition factors of $G$ are controlled by $\rat(G)$ in some
sense. Specifically, if $S$ different from the simple linear groups
$\PSL_2(q)$ is a nonabelian composition factor of $G$, then the
order of $S$ and the number of composition factors of $G$ isomorphic
to $S$ are both bounded in terms of $\rat(G)$. Furthermore, when the
groups $\PSL_2(q)$ are not composition factors of $G$, we prove that
$|G:\Oinfty(G)|\leq \rat(G)^{21}$ where $\Oinfty(G)$ denotes the
solvable radical of $G$.
\end{abstract}

\maketitle

%%%%%%%%%%%%%%%%%%%%%%%%%%%%%%%%%%%%%%%%%%%%%%%%%%%%%%%%%%%%%%%%%%%%%%

\section{Introduction}

For a finite group $G$, let $b(G)$ denote the maximum degree of an
(ordinary) irreducible character of $G$. If $G$ is nonabelian, we
denote by $c(G)$ the minimum degree of a nonlinear irreducible
character of $G$, and write
\[\rat(G):=\frac{b(G)}{c(G)}.\]
This ratio is referred to as the \emph{character degree ratio} of
$G$. Indeed, $\rat(G)$ is the largest ratio of degrees of two
arbitrary nonlinear irreducible characters of $G$. When $G$ is
abelian, we adopt a convention that $\rat(G)=1$.

The character degree ratio was first introduced and studied by
I.\,M.~Isaacs in~\cite{Isaacs2} in connection with character kernels
of finite groups. It is clear that $\rat(G)=1$ if and only if $G$
has at most two character degrees and the structure of such a group
is fairly simple. It was proved in~\cite[Theorem~12.5]{Isaacs1} that
if $G$ has exactly two character degrees, say $1$ and $m>1$, then
$G$ either has an abelian normal subgroup of index $m$ or is the
direct product of a $p$-group and an abelian group. Consequently, a
group with exactly two character degrees must be metabelian but not
abelian. From this initial result, it is natural to ask whether one
can get some structural information about a finite group from its
character degree ratio.

For instance, Isaacs proved in~\cite[Theorem~6.1 and
Corollary~6.5]{Isaacs2} that the derived length and the Fitting
height of a solvable group $G$ are bounded above respectively by
$3+4\log_2(\rat(G))$ and $3+2\log_2(\rat(G))$. This result indicates
that the derived length and the Fitting height of a solvable group
are controlled by its character degree ratio.

The main goal of this paper is to show that the nonabelian
composition factors of an arbitrary finite group $G$ are somehow
also controlled by $\rat(G)$. Our first result is the following.

\begin{theoremA}\label{main theoremA} Let $G$ be a finite group and $S$ a nonabelian composition factor of
$G$ different from the simple groups $\PSL_2(q)$ with $q$ a prime
power. Then the order of $S$ and the number of times that $S$ occurs
as a composition factor of $G$ are both bounded in terms of
$\rat(G)$.
\end{theoremA}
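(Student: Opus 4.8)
The plan is to derive both assertions from two ingredients: (a) a classification-based lemma saying that the character degree ratio of a nonabelian simple group $S\not\cong\PSL_2(q)$ grows with $|S|$ even after compensating for outer automorphisms, and (b) Clifford theory applied to a chief factor of $G$ in which $S$ appears. For (a) I would prove: there is a function $g$ with $g(x)\to\infty$ as $x\to\infty$ such that every finite nonabelian simple group $S\not\cong\PSL_2(q)$ satisfies $\rat(S)\geq|\Out(S)|\,g(|S|)$. This is checked family by family --- for $\Alt_n$ the smallest nonlinear degree is linear in $n$ while the largest grows super-polynomially; for groups of Lie type of untwisted rank at least $2$ one compares the Steinberg degree with a small unipotent degree; and there remain only finitely many groups. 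The hypothesis $S\neq\PSL_2(q)$ is exactly what makes (a) true, since $\rat(\PSL_2(q))$ stays bounded.

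Now suppose $S\not\cong\PSL_2(q)$ is a composition factor of $G$. Then $G$ has a chief factor $W=M/L\cong S^m$ for some $m\geq1$, and $W$ is a minimal normal subgroup of $\overline G:=G/L$, with $\rat(\overline G)\leq\rat(G)$. Writing $W=S_1\times\cdots\times S_m$, a character $\theta\in\Irr(S)$ of degree $b(S)$ yields $\theta\otimes\cdots\otimes\theta\in\Irr(W)$, hence $b(\overline G)\geq b(S)^m$. On the other hand, taking $\theta_0\in\Irr(S)$ of degree $c(S)$, choosing a character of the almost simple group $N_{\overline G}(S_1)/C_{\overline G}(S_1)$ lying over $\theta_0$ and of degree at most $|\Out(S)|\,c(S)$, inflating it to $N_{\overline G}(S_1)$ and inducing to $\overline G$, gives a character of $\overline G$ of degree at most $m\,|\Out(S)|\,c(S)$ all of whose irreducible constituents are nonlinear; thus $c(\overline G)\leq m\,|\Out(S)|\,c(S)$. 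Consequently
\[
\rat(G)\ \geq\ \rat(\overline G)\ \geq\ \frac{b(S)^{m}}{m\,|\Out(S)|\,c(S)}\ =\ \frac{b(S)^{m-1}}{m}\cdot\frac{\rat(S)}{|\Out(S)|}.
\]
Since $b(S)^{m-1}\geq m$ for all $m\geq1$, we get $\rat(S)/|\Out(S)|\leq\rat(G)$, so (a) bounds $|S|$ in terms of $\rat(G)$; and once $|S|$, hence $|\Out(S)|$, $b(S)$, $c(S)$, are bounded, the displayed inequality (with $b(S)\geq3$) forces $m$ to be bounded in terms of $\rat(G)$ as well.

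For the total number $n_S(G)$ of composition factors isomorphic to $S$, note that $n_S(G)=n_S(G/\Oinfty(G))$ since the solvable radical has no nonabelian composition factors, and $\rat(G/\Oinfty(G))\leq\rat(G)$; so we may assume $\Oinfty(G)=1$, in which case $\Soc(G)=S_1\times\cdots\times S_r$ is a direct product of nonabelian simple groups and $G$ embeds in $\Aut(\Soc(G))$. Then $b(G)\geq b(\Soc(G))=\prod_i b(S_i)\geq 3^{r}$; moreover, grouping the $S_i$ by isomorphism type shows that $G/\Soc(G)$ embeds in $\prod_l\Out(T_l)\wr\Sym(k_l)$, and projecting onto $\prod_l\Sym(k_l)$ (the kernel is solvable) shows $G/\Soc(G)$ has at most $\sum_l\log_{60}(k_l!)\leq r\log r$ nonabelian composition factors. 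Hence $n_S(G)\leq r+r\log r$, so $b(G)\geq 3^{r}$ tends to infinity with $n_S(G)$. Finally, since $S$ is a composition factor of $G$, inflating to $G$ the small nonlinear character of $G/L$ built above shows $c(G)\leq m\,|\Out(S)|\,c(S)$, which is bounded in terms of $\rat(G)$; combined with the lower bound on $b(G)$ in terms of $n_S(G)$ and the identity $\rat(G)=b(G)/c(G)$, this bounds $n_S(G)$ in terms of $\rat(G)$.

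The step I expect to be the main obstacle is ingredient (a): it must be established uniformly across all families of simple groups with the correct dependence on $|\Out(S)|$, and this is precisely where the classification is used and where $\PSL_2(q)$ has to be excluded. The Clifford-theoretic estimates above are routine in isolation, but must be organized so that every bound genuinely depends only on $\rat(G)$ --- in particular the construction of a small nonlinear character of $G$ lying over $\theta_0$, and the combinatorial count of nonabelian composition factors of $G/\Soc(G)$ arising from symmetric groups.
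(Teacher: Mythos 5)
Your approach is correct in outline but genuinely different from the paper's. The paper's key lemma (its Theorem~\ref{theorem-extending-character}) is stronger than your ingredient~(a): it produces two non-principal characters $\alpha,\beta\in\Irr(S)$, \emph{both extendible to $\Aut(S)$}, with $\alpha(1)/\beta(1)>|S|^{1/14}$. With that, for a nonabelian minimal normal subgroup $N=S^k$ not involving $\PSL_2(q)$, the tensor characters $\alpha^{\times k}$ and $\beta^{\times k}$ extend to all of $G$ (by a lemma of Bianchi--Chillag--Lewis--Pacifici), and Gallagher's theorem then gives $b(G/N)\alpha(1)^k\in\cd(G)$, which yields the clean multiplicative inequality $\rat(G)^{14}\geq\rat(G/N)^{14}\,|N|$. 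Iterating through a chief series gives directly that the product of the orders of all nonabelian chief factors not involving $\PSL_2(q)$ is at most $\rat(G)^{14}$, which simultaneously bounds $|S|$ and the multiplicity of $S$ as a composition factor --- no separate socle analysis is needed. Your route avoids the extendibility machinery by using Clifford theory to bound $c(\overline G)$ from above via a small character of the almost simple group $N_{\overline G}(S_1)/C_{\overline G}(S_1)$ lying over a degree-$c(S)$ character, and this does work; but it requires the extra combinatorial argument with $\Soc(G/\Oinfty(G))$, the wreath-product embedding, and the $\log_{60}$ count of composition factors, and the resulting bounds are implicit. The paper's multiplicative identity is what also makes the polynomial bound $|G:\Oinfty(G)|\leq\rat(G)^{21}$ of Theorem~B possible, so the extendibility lemma pays for itself later. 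One small caution in your sketch: your ingredient~(a) as stated (with the $|\Out(S)|$ in the denominator) is itself a CFSG check of the same magnitude as the paper's Theorem~\ref{theorem-extending-character}, so the effort is not really saved --- it is merely repackaged.
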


Let $S$ be a nonabelian composition factor of $G$. By $|S|$ is
bounded in terms of $\rat(G)$ we mean that there is an (increasing)
function $f: \QQ^+\rightarrow\RR$ such that $|S|<f(\rat(G))$. The
family of simple groups $\PSL_2(q)$ is special in this context. It
is known (see~\cite{White} for instance) that the set of character
degrees of $\PGL_2(q)$ is $\{1,q-1,q,q+1\}$ and thus
$\rat(\PGL_2(q))=(q+1)/(q-1)\rightarrow 1$ as  $q\rightarrow
\infty$. Therefore, the exclusion of $\PSL_2(q)$ in Theorem~A is
necessary.

When $G$ is a solvable group, we have seen that the derived length
of $G$ is $\rat(G)$-bounded. However, the order $|G|$ or even the
index $|G:\bF(G)|$ of the Fitting subgroup $\bF(G)$ of $G$ is not
$\rat(G)$-bounded, see Section~\ref{section example} for some
examples. We prove that if the simple groups $\PSL_2(q)$ are not
composition factors of $G$, then the order of the `non-solvable
part' of $G$ is polynomially bounded in terms of $\rat(G)$.

\begin{theoremB}\label{main theoremB} Let $G$ be a finite group with no composition factor
isomorphic to $\PSL_2(q)$ with $q$ a prime power. Let $\Oinfty(G)$
denote the solvable radical of $G$. Then
$|G:\Oinfty(G)|\leq\rat(G)^{21}$.
\end{theoremB}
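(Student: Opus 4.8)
The plan is to pass to the ``semisimple part'' $\bar G:=G/\Oinfty(G)$ and charge the order of its socle and outer/permutation part against $\rat(\bar G)$ by Clifford theory over the socle. First, for any $N\trianglelefteq G$ inflation of characters gives $b(G/N)\le b(G)$ and $c(G/N)\ge c(G)$, hence $\rat(G/N)\le\rat(G)$; with $\bar G:=G/\Oinfty(G)$ this yields $\rat(\bar G)\le\rat(G)$, while $\Oinfty(\bar G)=1$ and, since $\Oinfty(G)$ is solvable, the nonabelian composition factors of $\bar G$ are precisely those of $G$ (so none is a $\PSL_2(q)$). It thus suffices to prove $|\bar G|\le\rat(\bar G)^{21}$. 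Since $\Oinfty(\bar G)=1$, the socle $N:=\Soc(\bar G)=S_1\times\cdots\times S_k$ is a product of nonabelian simple groups with $\bC_{\bar G}(N)=1$, so $\bar G\hookrightarrow\Aut(N)=\prod_j\big(\Aut(T_j)\wr\Sy_{a_j}\big)$, where $T_1,\dots,T_m$ are the distinct isomorphism types occurring and $a_j$ the multiplicity of $T_j$; each $T_j$ is a composition factor of $G$, hence $T_j\not\cong\PSL_2(q)$. Therefore $|\bar G|\le\prod_j|T_j|^{a_j}|\Out(T_j)|^{a_j}a_j!$, and all three kinds of factors must be bounded in terms of $\rat(\bar G)$.

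The arithmetic input is a classification-based fact about simple groups: every nonabelian simple $S\not\cong\PSL_2(q)$ carries a good supply of nontrivial irreducible characters extending to $\Aut(S)$, including one of degree $e(S)\ge |S|^{\alpha}$ for a fixed $\alpha>0$ (the Steinberg character $\St$ for groups of Lie type, a near-top $\Sy_n$-stable character for $\Al_n$, case checks for sporadic groups), while $c(S)\le|S|^{\beta}$ and $|\Out(S)|\le|S|^{\gamma}$ with $\beta<\alpha$ and $\gamma$ small. The family $\PSL_2(q)$ must be excluded here, since for it $b(S)$ and $c(S)$ are both $\approx q$ while $|S|\approx q^3$, so no exponents $\alpha>\beta$ exist.

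Now one charges $N$ against $\rat(\bar G)$. For a lower bound on $b(\bar G)$: on each $\bar G$-orbit of socle factors place (extensions of) the chosen extendible characters of the relevant type, using as many \emph{distinct} ones as the type affords; since they extend to the respective automorphism groups, the resulting $\theta\in\Irr(N)$ extends to its inertia group in $\bar G$, and inducing up gives $\chi\in\Irr(\bar G)$ with $\chi(1)\ge\big(\prod_j e(T_j)^{a_j}\big)\cdot|\bar P|$, where $\bar P\le\prod_j\Sy_{a_j}$ is the permutation group $\bar G$ induces on the socle factors (when an orbit is longer than the number of distinct extendible characters of its type, using the \emph{large}-degree ones still gives a bound of the same shape up to a $\mathrm{poly}$ factor). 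For an upper bound on $c(\bar G)$: fixing a factor $S_1\cong T_1$, putting a character of $S_1$ of degree $c(T_1)$ there and the trivial character elsewhere, and inducing the Clifford correspondent up, produces a \emph{nonlinear} character of $\bar G$ of degree at most $\mathrm{poly}(a_1)\cdot|\Out(T_1)|\cdot c(T_1)$. Dividing, $\rat(\bar G)\ge |\bar P|\cdot\prod_j e(T_j)^{a_j}\big/\big(\mathrm{poly}(k)\,|\Out(T_1)|\,c(T_1)\big)$, and feeding in the simple-group inequalities together with $|\bar P|\le\prod_j a_j!$ one verifies $\prod_j|T_j|^{a_j}|\Out(T_j)|^{a_j}a_j!\le\rat(\bar G)^{21}$; the exponent $21$ is the accumulated loss from $\alpha$, from $|\Out|$, and from the permutation bookkeeping, the tightest single-simple cases being low-rank classical groups such as $\PSL_3(q)$, $\PSU_3(q)$ and the small Suzuki and Ree groups.

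The main obstacle is exactly the interaction of many isomorphic composition factors with the factor $a_j!$ in $|\bar G|$: a naive construction only deposits $\prod_j e(T_j)^{a_j}$ (an exponential amount) into $b(\bar G)$, whereas $a_j!$ grows super-exponentially, so one must get the full $|\bar P|$ — and hence the factorials — into $\chi(1)$ by choosing the $\theta$-components on distinct socle factors as genuinely distinct extendible characters of large degree, which forces one to track how many such characters each simple type admits and to control how their degrees compare with $|S|$, $c(S)$ and $|\Out(S)|$ uniformly over all types. Carrying out this uniform bookkeeping, including the delicate per-family estimates on $e(S)$, $c(S)$, $|\Out(S)|$ and a finite check of the small cases, is where the real work lies.
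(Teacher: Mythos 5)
Your plan for Step~1 (pass to $\bar G=G/\Oinfty(G)$, note $\rat(\bar G)\le\rat(G)$ and that the composition factors are preserved) matches the paper. After that, however, the two arguments diverge, and your route has a genuine gap in the place you yourself flag as ``where the real work lies.''

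You try to handle $\bar G$ in one stroke by charging $|\Soc(\bar G)|$, $|\Out|$-contributions, \emph{and} the permutation part $\bar P\le\prod_j\Sy_{a_j}$ all against $\rat(\bar G)$, by choosing distinct extendible characters of large degree on the socle factors so that the inertia group in $\bar G$ is cut down and $\chi(1)$ picks up $|\bar P|$. The problem is that this requires, for each simple type $T_j$, roughly $a_j$ \emph{distinct} $\Aut(T_j)$-extendible irreducibles of degree $\ge|T_j|^{\alpha}$ --- and no such supply exists uniformly. For example, $\Al_n$ has only $p(n)$ irreducible characters altogether (sub-exponential in $n$), whereas $|\Al_n|\approx n!/2$, and nothing prevents the multiplicity $a_j$ from exceeding $p(n)$ by an enormous margin (take $\Al_n\wr\Sy_{a}$ with $a$ huge). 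Once you are forced to repeat characters, the inertia group in the base of the wreath product is no longer trivial, and the portion of $|\bar P|$ absorbed into $\chi(1)$ degrades in a way that your ``up to a $\mathrm{poly}$ factor'' remark does not control. So the central inequality $\rat(\bar G)\gtrsim|\bar P|\cdot\prod_j e(T_j)^{a_j}/(\cdots)$ is not established. Moreover, the inequality you need at the end concerns $|\bar G|\le|\bar P|\cdot\prod_j|\Aut(T_j)|^{a_j}$ rather than $\prod_j|T_j|^{a_j}|\Out(T_j)|^{a_j}a_j!$, and silently replacing $|\bar P|$ by $\prod a_j!$ is an over-count that would have to be undone.

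The paper takes a different and more economical route. It only ever extracts \emph{one} minimal normal subgroup $N$ at a time, and it only charges $|N|$ against the character degree ratio: with two extendible characters $\alpha,\beta$ per simple type satisfying $\alpha(1)/\beta(1)>|S|^{1/14}$, one gets $\rat(G)^{14}\ge\rat(G/N)^{14}\,|N|$ (Proposition~\ref{proposition solvable radical}). No factorials and no large families of distinct characters are needed. The ``overhead'' above $N$ — outer automorphisms and the permutation action on the simple factors of $N$ — is handled group-theoretically, not by characters: letting $M/N=\Oinfty(G/N)$, the quotient $M/N$ is solvable, so $M$ modulo the kernel of its action on the simple factors is a \emph{solvable} permutation group and Mar\'oti's bound (Lemma~\ref{lemmaMaroti}) gives $|M:\Oinfty(M)|\le|N|^{1.43}$ (Proposition~\ref{lemma Gluck}). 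Induction on $G/N$ then closes the argument, and $21$ emerges essentially as $14\cdot1.43$ plus a little slack, not from a delicate per-family optimization. To repair your proposal you would either have to bring in Mar\'oti-type bounds as a substitute for the ``enough distinct characters'' claim, or prove a nontrivial new statement about the number of large extendible characters of simple groups, which is not in the paper and is likely false in the form you would need.
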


Theorem~A suggests that when $\rat(G)$ is small enough, then $G$
does not have any nonabelian composition factor different from
$\PSL_2(q)$. We give an upper bound for $\rat(G)$ so that every
nonabelian composition factor $S\neq \PSL_2(q)$ of $G$ disappears.

\begin{theoremC}\label{main theoremC} Let $G$ be a finite group with
$\rat(G)<16/5$. Then the only possible non-abelian composition
factors of $G$ are $\PSL_2(q)$ where $q\geq5$ is a prime power.
Consequently, if the simple groups $\PSL_2(q)$ are not composition
factors of $G$, then $G$ is solvable.
\end{theoremC}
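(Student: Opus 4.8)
The plan is to reduce the statement to a finite check on simple groups together with an application of Theorem~A (in its qualitative form) restated with the explicit constant $16/5$. More precisely, I would argue by contradiction: suppose $G$ is a counterexample of minimal order, so $\rat(G)<16/5$ but $G$ has a nonabelian composition factor $S\not\cong\PSL_2(q)$. The first step is to pass to a situation where $S$ itself, or a group closely built from $S$, is a section of $G$ that is forced to have character degree ratio at least $16/5$; this contradicts the well-known fact (used throughout the paper) that $\rat$ cannot increase too much when passing to subgroups and quotients. The key monotonicity input I would isolate as a lemma: if $N\trianglelefteq G$ then $\rat(G/N)\le\rat(G)$, and if $H\le G$ then $c(H)\ge$ something controlled while $b(H)\le b(G)$ in the relevant cases — in fact the cleanest route is the standard estimate $\rat(G)\ge b(S)/c(S)$ divided by a bounded factor whenever $S$ is a composition factor, but to get the sharp constant $16/5$ I expect one must instead use that $\rat(G)\ge \rat(S_0)$ for a suitable quasisimple or almost simple group $S_0$ with socle $S$, via Clifford theory applied to a chief factor of $G$ isomorphic to a direct power of $S$.

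The technical heart, then, is the following: for every nonabelian finite simple group $S$ other than $\PSL_2(q)$, one has $\rat(S)\ge 16/5$, and moreover this persists for the relevant covers and extensions that can appear as sections of $G$. I would prove this by going through the classification of finite simple groups. For the alternating groups $\Al_n$ with $n\ge5$ (noting $\Al_5\cong\PSL_2(4)$ and $\Al_6\cong\PSL_2(9)$ are excluded), one checks $n\ge7$ and uses known character degree data — e.g.\ $\Al_7$ has an irreducible of degree $35$ and a nonlinear one of degree $6$ (from $\Sy_7$'s standard character restricted, or directly the $6$-dimensional one), giving a ratio well above $16/5$; for larger $n$ the ratio only grows, since the largest degree is roughly $\sqrt{n!}$ while the smallest nonlinear degree is $n-1$ or $n-2$. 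For the groups of Lie type other than $\PSL_2(q)$, I would use the Steinberg character of degree $|S|_p$ (the $p$-part of the order) against the smallest nonlinear degree, which by Landazuri–Seitz–type lower bounds is at least a fixed small polynomial in $q$ but is dwarfed by $|S|_p$ once the rank is at least $2$ or the rank is $1$ but the group is $\PSU_3(q)$, $\Sp_4(q)$, ${}^2B_2(q)$, etc.; the rank-one exceptions ${}^2B_2(q)$, ${}^2G_2(q)$, $\PSL_2(q)$, $\PSU_3(q)$ must be handled with their explicit degree lists, and only $\PSL_2(q)$ should survive with $\rat\to1$. The $26$ sporadic groups are a finite list and one simply reads off $\rat$ from the ATLAS, checking each exceeds $16/5$.

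The main obstacle, and where I would spend the most care, is the passage from a composition factor $S$ of $G$ to a genuine bound on $\rat(G)$ with the \emph{sharp} constant $16/5$: a crude argument loses constants, so I would need the precise Clifford-theoretic statement that if $G$ has a chief factor $N/M\cong S^k$, then $b(G)/c(G)\ge b(S)/c(S)$ — intuitively because an irreducible character of $N/M$ of maximal degree $b(S)^k$ and one of minimal nonlinear degree $c(S)$ (extended trivially on the other $k-1$ factors, or handled via the inertia group) induce or extend into $G$ with degrees whose ratio is at least $b(S)/c(S)$. Making this rigorous requires a short argument with Gallagher's theorem and the fact that character degrees of $N/M$ divide corresponding degrees of $G$, together with the observation that $c(G)\le$ the degree of some character lying over a nonlinear character of $N/M$ of degree $c(S)$. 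Once this lemma is in hand, Theorem~C follows by combining it with the simple-group computation $\rat(S)\ge 16/5$ for all relevant $S$; the final sentence ("if $\PSL_2(q)$ are not composition factors, then $G$ is solvable") is then immediate, since the only nonabelian simple groups left available are the $\PSL_2(q)$, and excluding them forces every composition factor to be cyclic.
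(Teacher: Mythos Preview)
Your outline correctly identifies the overall structure---reduce to a chief factor $N/M\cong S^k$ with $S\not\cong\PSL_2(q)$, and produce from $S$ two nonlinear character degrees of $G$ whose ratio is at least $16/5$---but there is a genuine gap in the reduction step, precisely where you flag it. Your proposed lemma, that $\rat(G)\ge b(S)/c(S)$ for any nonabelian composition factor $S$, is not established by the sketch you give. The problem is on the \emph{small} side: to bound $c(G)$ from above you need an actual nonlinear irreducible character of $G$ of controlled degree, but a character of $N$ of the shape $\beta\times 1\times\cdots\times 1$ with $\beta(1)=c(S)$ is never $G$-invariant when $k>1$ (since $G$ permutes the simple factors transitively), and the irreducible characters of $G$ lying over it can have degree much larger than $c(S)$. ``Inducing'' or ``handling via the inertia group'' only gives \emph{lower} bounds on the degrees above a given constituent of $N$, not the upper bound on $c(G)$ that you need.

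The paper's fix is to strengthen the simple-group input: one proves not merely that $\rat(S)\ge 16/5$, but that $S$ possesses two non-principal irreducible characters $\alpha,\beta$ \emph{both extending to $\Aut(S)$} with $\alpha(1)/\beta(1)\ge 16/5$ (Lemma~\ref{lemma groups of Lie type 16/5}, the sharp-constant analogue of Theorem~\ref{theorem-extending-character}). The extension hypothesis is exactly what makes the reduction lossless: by the mechanism of Proposition~\ref{proposition solvable radical}, the tensor products $\alpha^{\times k}$ and $\beta^{\times k}$ then extend from $N$ to $G$, so $\beta(1)^k$ is a genuine nonlinear degree of $G$ (whence $c(G)\le\beta(1)^k$) and, via Gallagher, $\alpha(1)^k$ divides a degree of $G$ (whence $b(G)\ge\alpha(1)^k$). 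This yields $\rat(G)\ge(\alpha(1)/\beta(1))^k\ge 16/5$ with no constants lost. Thus your case-by-case verification over simple groups must also track extendibility to $\Aut(S)$; in practice one takes the Steinberg character for $\alpha$ and a carefully chosen unipotent character for $\beta$, invoking Malle's results (Lemmas~\ref{lemma Malle 1} and~\ref{lemma Malle 2}) to guarantee the extension, exactly as in the proof of Theorem~\ref{theorem-extending-character}.
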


Remark that the bound in Theorem~C can not be improved as
$\rat(\PSL_3(4))=16/5$.

The organization of the paper is as follows. In the next section, we
state a result on character degree ratio of simple groups, which
will be crucial in the proofs of the main theorems. The proof of
this result is presented in Section~\ref{sectionalternatinggroups}
for the alternating groups and in Section~\ref{sectiongroups of Lie
type} for the simple groups of Lie type and simple sporadic groups.
Theorems~A, B, and~C are proved respectively in
Sections~\ref{section proof results A},~\ref{section proof results
B}, and~\ref{section proof theoremC}. In the final section, we
present some examples of solvable groups showing that $|G:\bF(G)|$
is not bounded in terms of $\rat(G)$.

%%%%%%%%%%%%%%%%%%%%%%%%%%%%%%%%%%%%%%%%%%%%%%%%%%%%%%%%%%%%%%%%%%%%%%%%

\section{Composition factors and character degree ratio - Theorem~A}\label{section proof results A}

We will prove Theorem~A in this section. To do that, we assume the
following important result on the character degree ratios of
nonabelian simple groups and we postpone its proof until
Sections~\ref{sectionalternatinggroups} and~\ref{sectiongroups of
Lie type}.

\begin{theorem}\label{theorem-extending-character} Let $S$ be a nonabelian simple
group different from $\PSL_2(q)$. Then $S$ has two non-principal
irreducible characters $\alpha$ and $\beta$ extendible to $\Aut(S)$
such that
\[\frac{\alpha(1)}{\beta(1)}>|S|^{1/14}.\]
\end{theorem}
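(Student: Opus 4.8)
The plan is to verify the statement family by family across the classification of finite simple groups, producing in each case two nonprincipal irreducible characters $\alpha,\beta$ that extend to $\Aut(S)$ and satisfy $\alpha(1)/\beta(1)>|S|^{1/14}$. For the alternating groups $\Al_n$ with $n\geq 5$ (and $n\neq 6$, which can be handled directly), I would take $\beta$ to be the $(n-1)$-dimensional standard character, which is well known to extend to $\Sy_n$, hence to $\Aut(\Al_n)$; for $\alpha$ I would pick an irreducible character of large degree coming from a partition near the ``middle'' of the Young lattice — for instance a hook or a two-row partition — whose degree is polynomial in $n!$ of sufficiently high order, and which is fixed by the outer automorphism (so it extends). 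Since $|\Al_n|=n!/2$ grows only like $n!$, while one can find irreducible degrees of $\Al_n$ that are superpolynomially large compared to $n$, the ratio $\alpha(1)/(n-1)$ will exceed $|\Al_n|^{1/14}$ for all $n$ above a small explicit bound; the finitely many small $n$ are checked by hand using the known character tables. This is carried out in Section \ref{sectionalternatinggroups}.

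For the simple groups of Lie type $S$ of characteristic $p$, the natural choice is to take $\beta$ to be the smallest nontrivial irreducible character — whose degree, the minimal degree $d(S)$, is tabulated (e.g.\ by Tiep--Zalesskii / Landazuri--Seitz-type bounds) and is roughly $q^{r-1}$ or so in the rank-$r$ case — and $\alpha$ to be (a constituent of) the Steinberg character $\St$ of degree $|S|_p$, the full $p$-part of $|S|$. Both $\St$ and the minimal character are stable under $\Aut(S)$ in essentially all cases, and $\St$ is known to extend to $\Aut(S)$; when the minimal character does not extend (a few small exceptions, and the issue of $\PSL$ vs.\ $\SL$ for unitary/linear groups) I would instead use the next available small $\Aut(S)$-invariant character, or swap the roles and bound $|S|_p / d(S)$ from below. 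The key estimate is then $\St(1)/\beta(1) = |S|_p / d(S)$, and one needs this to exceed $|S|^{1/14}$. Writing $|S|\leq |S|_p^{C}$ for a constant $C$ depending on the root system (since $|S| = |S|_p \cdot \prod(q^{d_i}-\varepsilon_i)$ and each cyclotomic factor is at most $q^{d_i}\leq |S|_p$-ish), and noting $d(S)$ is much smaller than $|S|_p$, the inequality reduces to a bounded check: $|S|_p / d(S) > |S|^{1/14}$ holds whenever $q$ or the rank is large enough, and the remaining finite list of ``small'' groups of Lie type is dispatched by consulting \cite{White} and the Atlas. The sporadic groups and the Tits group form a finite list and are checked individually from the Atlas, choosing $\alpha,\beta$ to be two extendible characters (all such groups have trivial or order-$2$ outer automorphism group, so extendibility is easy to read off) with ratio exceeding $|S|^{1/14}$.

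The main obstacle is \emph{uniformity of the exponent $1/14$}: the exponent must work simultaneously for every simple group, so the delicate part is the groups of small rank over small fields — notably $\PSL_3(q)$, $\PSU_3(q)$, $\Sp_4(q)$, $\mathrm{G}_2(q)$ and the like — where $|S|_p/d(S)$ is only polynomially larger than $|S|^{1/14}$ and the comparison between $|S|$ and $|S|_p$ is least favorable; here I expect to need the sharper minimal-degree values (not just Landazuri--Seitz) and possibly a more careful choice of $\alpha$ (a unipotent character of degree strictly between $d(S)$ and $\St(1)$ whose extendibility to $\Aut(S)$ is guaranteed by the results of Malle and others on extendibility of unipotent characters). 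A secondary nuisance is extendibility bookkeeping for the families where $\Out(S)$ contains diagonal automorphisms (linear and unitary groups), since there the relevant characters may only extend after verifying invariance under the diagonal part; this is handled using Lusztig's parametrization together with the standard extendibility criteria, but it is what makes the case analysis long rather than conceptually hard. The upshot is that $1/14$ is chosen precisely so that, after these case checks, no exceptions remain — exactly as needed to feed into the proofs of Theorems~A and~B.
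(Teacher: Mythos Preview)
Your overall plan matches the paper's: case-by-case via the classification, with $\alpha=\St_S$ for Lie type, the $(n-1)$-dimensional character as $\beta$ for $\Al_n$, and a direct Atlas check for sporadics. Two points need sharpening.

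For the alternating groups, your suggested candidates for $\alpha$ --- a hook or a two-row partition --- will \emph{not} work for large $n$. The maximal hook degree is $\binom{n-1}{\lfloor (n-1)/2\rfloor}\sim 2^n/\sqrt{n}$, and two-row partitions are no better, whereas you need $\alpha(1)>(n-1)(n!/2)^{1/14}$, which grows like $(n/e)^{n/14}$ and eventually overtakes $2^n$. Your parenthetical instinct (``near the middle of the Young lattice'') is the right one, but hooks and two-row shapes are precisely \emph{not} in the middle. The paper instead takes $\lambda$ to be a nearly-square partition, specifically any $\lambda$ with $(m^m)\le\lambda\le((m+2)^m)$ where $m=\lfloor\sqrt{n}\rfloor$; a short hook-length estimate plus Stirling then gives $H_\lambda<(m+1)^{(m+1)^2}<(n!)^{13/14}/(n-1)$ for $n\ge 64$, with small $n$ checked directly.

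For Lie type, the paper bypasses the Landazuri--Seitz minimal-degree character entirely and from the outset chooses $\beta$ to be a specific small \emph{unipotent} character (e.g.\ the one labeled $(n-1,1)$ for $\PSL_n$ and $\PSU_n$, or the analogous small symbol for the other classical families; an explicit table for the exceptional types). This is exactly your fallback, and it is the cleaner route: Malle's theorem that every unipotent character extends to its inertia group in $\Aut(S)$, together with Lusztig's short list of the unipotent characters not fixed by the full $\Aut(S)$, makes the extendibility bookkeeping automatic --- there is no separate tracking of diagonal automorphisms. Finally, the exponent $1/14$ is dictated not by the small-rank classical groups you flag but by the Ree groups ${}^2G_2(q)$, where $\St(1)/\beta(1)\approx\sqrt{3q}$ against $|S|\approx q^7$.
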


\noindent This theorem will be proved in
Section~\ref{sectionalternatinggroups} for the alternating groups
and in Section~\ref{sectiongroups of Lie type} for simple groups of
Lie type and simple sporadic groups.

Now we observe an obvious but useful lemma.

\begin{lemma}\label{obvious lemma} If $M$ is a normal subgroup of $G$, then $\rat(G/M)\leq \rat(G)$.
\end{lemma}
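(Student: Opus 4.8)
The plan is to exploit the standard correspondence between $\Irr(G/M)$ and those irreducible characters of $G$ that have $M$ in their kernel. First I would recall that inflation along the quotient map $G\to G/M$ embeds $\Irr(G/M)$ into $\Irr(G)$ and preserves character degrees; moreover a character of $G/M$ is linear (respectively nonlinear) precisely when its inflation to $G$ is. Thus every character degree of $G/M$ occurs among the character degrees of $G$, with linear characters corresponding to linear characters.

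From this I would deduce the two inequalities $b(G/M)\le b(G)$ and $c(G/M)\ge c(G)$. The first is immediate, since $b(G/M)$ is the degree of some $\chi\in\Irr(G/M)$ whose inflation lies in $\Irr(G)$ and hence has degree at most $b(G)$. For the second, assuming $G/M$ is nonabelian, a nonlinear $\chi\in\Irr(G/M)$ of minimal degree inflates to a nonlinear irreducible character of $G$, so $\chi(1)\ge c(G)$; taking the minimum over such $\chi$ yields $c(G/M)\ge c(G)$. Since a quotient of a nonabelian group witnessing nonlinear characters forces $G$ itself to be nonabelian, both $\rat(G/M)$ and $\rat(G)$ are then given by the ratio formula, and we get $\rat(G/M)=b(G/M)/c(G/M)\le b(G)/c(G)=\rat(G)$.

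Finally I would dispose of the case where $G/M$ is abelian: here $\rat(G/M)=1$ by the stated convention, while $\rat(G)\ge 1$ always (either $G$ is abelian, so $\rat(G)=1$, or $b(G)\ge c(G)$), and the inequality holds trivially. There is no real obstacle in this lemma; the only point requiring a moment's attention is checking that the degenerate abelian cases are consistent with the convention $\rat=1$, and noticing that the comparison for $c(\cdot)$ runs in the opposite direction to that for $b(\cdot)$ — which is exactly what makes the ratio monotone under passage to quotients.
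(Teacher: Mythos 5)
Your proof is correct and is essentially the detailed version of the paper's one-line argument, which simply notes that every irreducible character of $G/M$ can be viewed (via inflation) as an irreducible character of $G$. Spelling out the two monotonicity inequalities $b(G/M)\le b(G)$ and $c(G/M)\ge c(G)$, and handling the abelian-quotient case via the convention $\rat=1$, is exactly what the paper leaves to the reader.
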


\begin{proof} This is obvious since every irreducible character of
$G/M$ can be viewed as an irreducible character of $G$.
\end{proof}

The following is an important step towards the proofs of Theorems~A
and~B.

\begin{proposition}\label{proposition solvable radical} Let $N$ be a
nonabelian minimal normal subgroup of $G$ such that the simple
groups $\PSL_2(q)$ are not direct factors of $N$. Then
\[\rat(G)^{14}\geq \rat(G/N)^{14}\cdot|N|.\]
\end{proposition}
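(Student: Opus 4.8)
The plan is to exploit the fact that a nonabelian minimal normal subgroup $N$ is a direct product $N=S_1\times\cdots\times S_k$ of copies of a single nonabelian simple group $S$, and that by Theorem~\ref{theorem-extending-character} the simple group $S$ (which is not a $\PSL_2(q)$ by hypothesis) carries two non-principal irreducible characters $\alpha,\beta$ extendible to $\Aut(S)$ with $\alpha(1)/\beta(1)>|S|^{1/14}$. The key point is that extendibility to $\Aut(S)$ lets us build, from $\alpha$ and $\beta$, irreducible characters of $G$ itself whose ratio of degrees is at least $(\alpha(1)/\beta(1))^k\cdot(\text{something coming from }G/N)$, and then to bound $|N|=|S|^k$ using $(\alpha(1)/\beta(1))^k>|S|^{k/14}=|N|^{1/14}$.

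First I would recall the standard fact (a theorem of Bianchi–Gillio–Herzog–Verardi, or see Lemma~5 in work on extendible characters of direct products) that because each of $\alpha,\beta$ extends to $\Aut(S)$, the characters $\alpha^{\times k}:=\alpha\times\cdots\times\alpha$ and $\beta^{\times k}$ of $N=S^k$ are $G$-invariant and in fact extend to $G$; here one uses that $G$ permutes the $k$ simple direct factors and that an $\Aut(S)$-extendible character of $S$ gives rise to a character of $N\rtimes(\text{its stabilizer})$ that is compatible with the $G$-action. Granting this, let $\tilde\alpha,\tilde\beta\in\Irr(G)$ be extensions of $\alpha^{\times k},\beta^{\times k}$ respectively. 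Now I would invoke Gallagher's theorem: multiplying $\tilde\alpha$ (resp. $\tilde\beta$) by an irreducible character $\theta$ of $G/N$ of maximal degree $b(G/N)$ gives irreducible characters of $G$ of degrees $\alpha(1)^k\, b(G/N)$ and, choosing the degree-$1$ character of $G/N$, $\beta(1)^k\cdot 1$. Hence $G$ has two nonlinear irreducible characters (note $\beta$ is non-principal, so $\beta(1)>1$ and $\beta^{\times k}$ is nonlinear; similarly for $\alpha$) whose degree ratio is at least $(\alpha(1)/\beta(1))^k\cdot b(G/N)$. Since $\rat(G)$ is the largest ratio of degrees of two nonlinear irreducible characters, this already gives $\rat(G)\ge (\alpha(1)/\beta(1))^k\cdot b(G/N)$.

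To finish I would also want the factor $c(G/N)^{-1}$, i.e. to replace $b(G/N)$ by $\rat(G/N)=b(G/N)/c(G/N)$. For this I would take $\mu\in\Irr(G/N)$ with $\mu(1)=c(G/N)$ (nonlinear) and multiply $\tilde\beta$ by $\mu$ (viewed as a character of $G$ via inflation), obtaining an irreducible character of $G$ of degree $\beta(1)^k\,c(G/N)$, which is nonlinear. Pairing this with the degree-$\alpha(1)^k b(G/N)$ character above yields
\[
\rat(G)\ \ge\ \Bigl(\frac{\alpha(1)}{\beta(1)}\Bigr)^{k}\cdot\frac{b(G/N)}{c(G/N)}\ =\ \Bigl(\frac{\alpha(1)}{\beta(1)}\Bigr)^{k}\cdot\rat(G/N).
\]
Raising to the $14$th power and using $(\alpha(1)/\beta(1))^{14k}>|S|^{k}=|N|$ gives $\rat(G)^{14}\ge\rat(G/N)^{14}\cdot|N|$, as desired. (If $G/N$ is abelian one uses $\rat(G/N)=1$ and $c(G/N)=b(G/N)=1$, and the argument degenerates harmlessly.)

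The main obstacle I expect is the extendibility step: one must verify carefully that $\alpha^{\times k}$ extends all the way from $N$ to $G$, not merely that it is $G$-invariant. The subtlety is that $G/N$ need not stabilize each factor $S_i$ individually — it permutes them — so one needs the version of the extension theorem for characters of a direct power that are simultaneously invariant under the base group's automorphisms and under coordinate permutations. The cleanest route is: the character $\alpha$, being $\Aut(S)$-extendible, extends to a character of $S\rtimes\Aut(S)$; then $\alpha^{\times k}$ extends to $S^k\rtimes(\Aut(S)\wr \mathsf{S}_k)$, and since $N_G(S_1)/C_G(S_1)$ (together with the permutation action) embeds into $\Aut(S)\wr\mathsf{S}_k$ in a way compatible with $G\to \Aut(S)\wr\mathsf{S}_k$, the restriction of this wreath-product extension pulls back to an extension to $G$. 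I would cite the relevant lemma on extending characters of direct products (e.g. from the literature on minimal normal subgroups and character extension) rather than reprove it. Once extendibility is in hand, the Gallagher-multiplication bookkeeping is routine.
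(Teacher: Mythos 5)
Your proof takes essentially the same route as the paper's: use Theorem~\ref{theorem-extending-character} to pick $\alpha,\beta\in\Irr(S)$, extend $\alpha^{\times k}$ and $\beta^{\times k}$ from $N$ to $G$ (the paper cites Bianchi--Chillag--Lewis--Pacifici, Lemma~5, for precisely the extendibility-through-permutation step you flag as the main obstacle), and apply Gallagher to manufacture the required pair of degrees. The only cosmetic difference is that the paper applies Gallagher solely to the extension of $\alpha^{\times k}$ and then uses the trivial bound $b(G/N)\geq\rat(G/N)$, while you also multiply the extension of $\beta^{\times k}$ by a character of $G/N$ of degree $c(G/N)$; both routes yield the same inequality $\rat(G)\geq\rat(G/N)\bigl(\alpha(1)/\beta(1)\bigr)^{k}$.
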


\begin{proof} Since $N$ is a
nonabelian minimal normal subgroup of $G$, we see that
$N=S\times \cdots\times S$, a direct product of $k$ copies of a
nonabelian simple group $S$. By
Theorem~\ref{theorem-extending-character}, there exist non-principal
characters $\alpha,\beta\in\Irr(S)$ such that $\alpha$ and $\beta$
extend to $\Aut(S)$ and $\alpha(1)/\beta(1)>|S|^{1/14}$.
By~\cite[Lemma~5]{Bianchi-Lewis}, both the product characters
$\beta\times\cdots\times \beta$ and $\alpha\times\cdots\times\alpha$
extend to $G$. Let $\psi$ and $\chi$ be their extensions,
respectively. First we have
\[\psi(1)=\beta(1)^k \text{ is a nontrivial character degree of } G.\]

Next, since $\chi|_{N}=\alpha\times\cdots\times\alpha$, Gallagher's
Theorem~\cite[Corollary~6.17]{Isaacs1} guarantees that there is a
bijection $\lambda\mapsto \lambda\chi$ between $\Irr(G/N)$ and the
set of irreducible characters of $G$ lying above
$\alpha\times\cdots\times\alpha$. In particular, by taking $\lambda$
to be an irreducible character of $G/N$ of the largest degree, we
deduce that
\[b(G/N)\chi(1)=b(G/N)\alpha(1)^k \text{ is a character degree of } G.\]
It follows that
\[\rat(G)\geq
\frac{b(G/N)\alpha(1)^k}{\beta(1)^k}\geq\rat(G/N)\left(\frac{\alpha(1)}{\beta(1)}\right)^k.\]
Since $\alpha(1)/\beta(1)>|S|^{1/14}$, we obtain \[\rat(G)\geq
\rat(G/N)|S|^{k/14}=\rat(G/N)|N|^{1/14},\] as the proposition
stated.
\end{proof}

We are now ready to prove the first main result.

\begin{proof}[Proof of Theorem~A] Let $S$ be a simple group and $N=S\times\cdots\times S$ a minimal normal subgroup of $G$. Lemma~\ref{obvious lemma} and
Proposition~\ref{proposition solvable radical} imply that
$\rat(G)\geq \rat(G/N)$ if $S$ is abelian or isomorphic to
$\PSL_2(q)$ (where $q$ is a prime power) and $\rat(G)\geq
\rat(G/N)|N|^{1/14}$ otherwise. Applying this repeatedly to
\[N_{i+1}/N_i\lhd G/N_i\] for $i=0,1,...,n-1$ where
\[1=N_0<N_1<\cdots<N_n=G\] is a chief series of $G$, we obtain that
the product of the orders of all nonabelian chief factors which are
not direct products of $\PSL_2(q)$ is at most $\rat(G)^{14}$. We
deduce that, if $S\neq \PSL_2(q)$ is a non-abelian composition
factor of $G$, then the order of $S$ and the number of times that
$S$ occurs as a composition factor of $G$ are both bounded in terms
of $\rat(G)$.
\end{proof}

%%%%%%%%%%%%%%%%%%%%%%%%%%%%%%%%%%%%%%%%%%%%%%%%%%%%%%%%%%%%%%%%%%%%%%%%

\section{Bounding the index of the solvable radical - Theorem~B}\label{section proof results B}

We record a result of Mar\'{o}ti on bounding the order of a
particular permutation group.

\begin{lemma}[Mar\'{o}ti~\cite{Maroti}]\label{lemmaMaroti} Let $G$ be a permutation group of degree $n$, and let $d$ be an integer
not less than $4$. If $G$ has no composition factors isomorphic to
an alternating group of degree greater than $d$, then $|G|\leq
d!^{(n-1)/(d-1)}$.
\end{lemma}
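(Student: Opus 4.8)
The plan is to argue by induction on the degree $n$, reducing to the case that $G$ is primitive and then invoking---via the classification of finite simple groups---Mar\'{o}ti's sharp bound on the order of a primitive permutation group. One elementary observation is used repeatedly: the function $j\mapsto \log(j!)/(j-1)$ is increasing for $j\ge 2$, so that $m\le d$ implies $m!\le d!^{(m-1)/(d-1)}$; in particular, whenever $|G|$ can only be bounded by $n!$ but we happen to know $n\le d$, the conclusion is immediate.

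First I would dispose of imprimitivity. If $G$ is intransitive with orbits of sizes $n_1,\dots,n_t$ (so $t\ge 2$ and $\sum_i n_i=n$), then $G$ embeds into $\prod_i G^{O_i}$, where each $G^{O_i}$ is a quotient of $G$ of degree $n_i<n$ and so inherits the hypothesis; by induction $|G|\le\prod_i d!^{(n_i-1)/(d-1)}=d!^{(n-t)/(d-1)}\le d!^{(n-1)/(d-1)}$. If $G$ is transitive but admits a nontrivial block system with $b$ blocks of size $a$ (so $ab=n$ and $1<a,b<n$), then $G$ embeds into $H\wr K$ in the imprimitive action, where $H$ is the group induced on a block by its setwise stabilizer (degree $a$) and $K$ the group induced on the set of blocks (degree $b$), both of which are sections of $G$; induction then gives
\[
|G|\le |H|^b\,|K|\le d!^{\,b(a-1)/(d-1)}\,d!^{\,(b-1)/(d-1)}=d!^{\,(ab-1)/(d-1)}=d!^{\,(n-1)/(d-1)}.
\]

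It remains to treat $G$ primitive of degree $n$. The case $n\le d$ is trivial, so assume $n>d\ge 4$. By Mar\'{o}ti's theorem one of the following holds: (i) $|G|\le n\prod_{i=0}^{\lfloor\log_2 n\rfloor-1}(n-2^i)$, and in particular $|G|\le n^{1+\log_2 n}$; (ii) $(G,n)$ is one of $(M_{11},11)$, $(M_{12},12)$, $(M_{23},23)$, $(M_{24},24)$ in the natural action; or (iii) there are integers $m\ge 5$, $1\le k<m$ and $r\ge 1$ with $n=\binom{m}{k}^r$ and $(\Al_m)^r\trianglelefteq G\le\Sy_m\wr\Sy_r$ in the product action on $k$-element subsets. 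In case (iii) the group $\Al_m$ is a composition factor of $G$, so the hypothesis forces $m\le d$; since $\binom{m}{k}\ge m$ we have $n\ge m^r$, and from $|G|\le(m!)^r\,r!$ together with $\log(m!)\le\frac{m-1}{d-1}\log(d!)$ one obtains $\log|G|\le\frac{r(m-1)}{d-1}\log(d!)+\log(r!)$, so the desired bound follows from the elementary inequality $r(m-1)+\frac{(d-1)\log(r!)}{\log(d!)}\le m^r-1\le n-1$, valid for $m\ge 5$, $r\ge 1$ and $d\ge 4$. Case (ii) amounts to four explicit numerical comparisons with $d!^{(n-1)/(d-1)}$, the worst being $d=4$. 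In case (i), since $\log(d!)/(d-1)\ge\log(24)/3$ for $d\ge 4$, the estimate $(1+\log_2 n)\log n\le\frac{n-1}{3}\log 24$ holds for all $n\ge 16$, and the finitely many values $5\le n\le 15$ are checked directly from the sharper product bound (the tightest case being $n=5$, $d=4$).

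The step I expect to be the real obstacle is the primitive case: there is no elementary substitute for Mar\'{o}ti's theorem, since the bare estimate $|G|\le n!$ is exponentially too weak, and one genuinely needs the CFSG-based structure-and-order classification of primitive groups to produce a bound of the shape $d!^{(n-1)/(d-1)}$ after using the hypothesis to cap the alternating constituents. Everything else---the two reductions, the monotonicity of $\log(j!)/(j-1)$, and the handful of numerical checks---is routine; the only slightly delicate points are the factor $r!$ in case (iii) and ensuring that all the small primitive degrees in case (i) are covered, which is exactly where the hypothesis $d\ge 4$ is needed.
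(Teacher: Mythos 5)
The paper does not prove this lemma; it is simply cited from Mar\'{o}ti's paper (it is a corollary of his CFSG-based classification of primitive groups), so there is no ``paper's proof'' to compare against. Your proposal is a reconstruction of Mar\'{o}ti's own argument, and the overall architecture---reduce to the primitive case via the intransitive and imprimitive reductions, then invoke the trichotomy for primitive groups and handle each branch numerically using the monotonicity of $\log(j!)/(j-1)$---is the right one.

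There is, however, one genuine (if fixable) gap in the imprimitive step. You apply the inductive hypothesis to $H$, the group induced on a block by its setwise stabilizer, justifying this by saying $H$ is a section of $G$. But $H$ is only a quotient of the \emph{subgroup} $G_{B_1}$, and composition factors of a subgroup (hence of a section) of $G$ need not be composition factors of $G$; only for quotients and for normal subgroups does the composition-factor multiset embed. So the hypothesis ``no composition factor isomorphic to $\Al_m$ with $m>d$'' does not automatically pass from $G$ to $H$, and the induction is not licensed as written. A concrete obstruction: let $G=\PSL_2(11)$ act on the $132$ cosets of a subgroup $\ZZ_5$; the cosets of $\Al_5$ form a block system with $11$ blocks of size $12$, the block stabilizer is $\Al_5$ acting faithfully on its block, so $H\cong\Al_5$ has $\Al_5$ as a composition factor while $G$ (being simple and not alternating) does not. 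With $d=4$, $G$ satisfies the hypothesis but $H$ does not. The standard repair is to work with the kernel $N$ of the action on the block system rather than with $H$: then $K:=G/N$ is a quotient and $N$ is normal, so both inherit the hypothesis, and $N$ embeds in $\prod_i N^{B_i}$ with each $N^{B_i}$ a quotient of $N$ of degree $a<n$, giving
\[
|G|=|N|\,|K|\le \Bigl(\prod_i |N^{B_i}|\Bigr)|K|\le d!^{\,b(a-1)/(d-1)}\cdot d!^{\,(b-1)/(d-1)}=d!^{\,(n-1)/(d-1)},
\]
the same numerical conclusion. The rest of your proposal---the intransitive reduction (where the factors genuinely are quotients of $G$), the three branches of the primitive case, and the finitely many small checks---is sound, with the caveat that, as you rightly flag, the primitive case rests unavoidably on Mar\'{o}ti's CFSG-based theorem.
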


This lemma can be used to prove a significant improvement of a
result of Gluck in~\cite[Proposition~2.5]{Gluck}.

\begin{proposition}\label{lemma Gluck} Let $N$ be a normal subgroup of $G$
such that $G/N$ is solvable. Then $|G:\Oinfty(G)|\leq |N|^{1.43}$.
\end{proposition}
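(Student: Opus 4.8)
The plan is to pass to $\bar G:=G/\Oinfty(G)$, so that $\Oinfty(\bar G)=1$ and $|G:\Oinfty(G)|=|\bar G|$, and then to deduce the statement from the following claim about abstract finite groups. For a finite group $H$ write $q(H)$ for the product of the orders of the nonabelian composition factors of $H$ (so $q(H)=1$ exactly when $H$ is solvable). \emph{Claim: if $\Oinfty(H)=1$, then $|H|\le q(H)^{1.43}$.} This gives the proposition at once: the nonabelian composition factors of $\bar G$ form a sub-multiset of those of $G$, and since $G/N$ is solvable its composition factors are all cyclic, so these are in fact a sub-multiset of the composition factors of $N$; hence $q(\bar G)\le|N|$ and $|\bar G|\le q(\bar G)^{1.43}\le|N|^{1.43}$.

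I would prove the Claim by induction on $|H|$; the case $H=1$ is trivial. Suppose $H\neq 1$. Since $\Oinfty(H)=1$, every minimal normal subgroup of $H$ is nonabelian, so fix one, say $M=T_1\times\cdots\times T_k$ with the $T_i\cong T$ a nonabelian simple group and $H$ permuting the $T_i$ transitively; let $P_0\le\Sy_k$ be this permutation action and put $C=C_H(M)$. Then $C\lhd H$ with $C\cap M=Z(M)=1$, and $\Oinfty(C)=1$ because the solvable radical of $C$ is characteristic in $C$, hence normal in $H$, hence trivial. Moreover both $M$ and $C$ normalise every $T_i$, so $CM$ lies in the kernel of the map $H\to\Sy_k$; as $C\cap M=1$ this yields $|C|<|H|$ and $|P_0|\le|H|/|CM|<|H|$. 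Since $H/C$ embeds in $\Aut(M)=\Aut(T)\wr\Sy_k$, projection onto $\Sy_k$ gives
\[|H|=|C|\cdot|H/C|\le|C|\cdot|\Aut(T)|^{k}\cdot|P_0|.\]
To control $|P_0|$, split off $R_0=\Oinfty(P_0)$: the solvable permutation group $R_0$ satisfies $|R_0|\le 24^{(k-1)/3}$ by Dixon's bound (the case $d=4$ of Lemma~\ref{lemmaMaroti}), while $P_0/R_0$ has trivial solvable radical and order less than $|H|$, so by induction $|P_0/R_0|\le q(P_0/R_0)^{1.43}=q(P_0)^{1.43}$; thus $|P_0|\le 24^{(k-1)/3}q(P_0)^{1.43}$. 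Applying the inductive hypothesis to $C$ as well, we obtain
\[|H|\le q(C)^{1.43}\cdot|\Aut(T)|^{k}\cdot 24^{(k-1)/3}\cdot q(P_0)^{1.43}.\]

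Two facts finish the proof. First, refining a composition series of $H$ through $C\lhd CM\lhd H$ shows that the multiset of nonabelian composition factors of $H$ contains those of $C$, the $k$ copies of $T$ coming from $M\cong CM/C$, and the nonabelian composition factors of $P_0$ (which is a quotient of $H/CM$), these three pieces being disjoint; hence $q(H)\ge q(C)\cdot|T|^{k}\cdot q(P_0)$. Second, the classification of finite simple groups gives $|\Out(T)|\cdot 24^{1/3}\le|T|^{0.43}$ for every nonabelian simple group $T$, so that
\[|\Aut(T)|^{k}\cdot 24^{(k-1)/3}=\bigl(|T|\,|\Out(T)|\bigr)^{k}\cdot 24^{(k-1)/3}\le\bigl(|T|\,|\Out(T)|\cdot 24^{1/3}\bigr)^{k}\le|T|^{1.43k}.\]
Combining these with the displayed bound yields $|H|\le\bigl(q(C)\cdot|T|^{k}\cdot q(P_0)\bigr)^{1.43}\le q(H)^{1.43}$, completing the induction.

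The hard part is getting these two inputs exactly right. For the bookkeeping it is essential that $P_0$ is harnessed as a \emph{quotient} of $H$, namely the conjugation action on the set of simple factors of $M$, rather than as a subgroup of an automorphism group of the socle: working with the whole socle $W$ at once would force one to control subdirect products inside the wreath factors of $\Aut(W)$, for which the naive composition-factor estimates fail. For the simple-group inequality $|\Out(T)|\cdot 24^{1/3}\le|T|^{0.43}$ one checks the families of the classification in turn; it is essentially sharp only for $T=\Al_5$, where $2\cdot 24^{1/3}\approx 5.77$ while $60^{0.43}\approx 5.82$, and it is precisely this near-equality, together with Dixon's exponent $1/3$ for solvable permutation groups, that forces the constant $1.43$ in the statement.
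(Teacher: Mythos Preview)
Your argument is correct. The two proofs share the same essential inputs --- Mar\'oti's bound $|R|\le 24^{(k-1)/3}$ for a solvable permutation group of degree $k$ and the classification check $|\Out(T)|\cdot 24^{1/3}\le |T|^{0.43}$ --- but organise the induction differently. The paper first reduces (by induction on $|N|$) to the case where $N$ is the \emph{unique} minimal normal subgroup of $G$; then $C_G(N)=1$ automatically, and since $G/N$ is solvable the whole permutation image $G/H\le\Sy_k$ is solvable, so Mar\'oti applies directly to it. You instead prove the stronger free-standing statement that $|H|\le q(H)^{1.43}$ whenever $\Oinfty(H)=1$, without ever invoking a solvable-quotient hypothesis: you carry the centraliser $C$ along by induction, and you split the permutation image $P_0$ into its solvable radical (handled by Mar\'oti) and a quotient with trivial solvable radical (handled by induction). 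Your route costs a bit more bookkeeping --- tracking $C$, and verifying the multiplicativity $q(H)\ge q(C)\cdot|T|^k\cdot q(P_0)$ across the series $1\lhd C\lhd CM\lhd H$ --- but it buys a cleaner intermediate result that makes no reference to the auxiliary subgroup $N$.
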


\begin{proof} Step 1: We may assume that $\Oinfty(G)=1$.

If $\Oinfty(G)\neq 1$, we replace $G$ by $G/\Oinfty(G)$ and $N$ by
$\Oinfty(G)N/\Oinfty(G)$. Then we use induction on $|G|$ and notice
that $\Oinfty(G/\Oinfty(G))=1$.

\medskip

Step 2: We may assume that $N$ is a unique minimal normal subgroup
of $G$.

Since $\Oinfty(G)=1$ and $G/N$ is solvable, it suffices to show that
$N$ is a minimal normal subgroup of $G$. Assume not. Then there
exists a minimal normal subgroup $M$ properly contained in $N$. Let
$K\subseteq G$ such that $K/M:=\Oinfty(G/M)$. Then by induction on
the order of $N$, we have
\[|G:K|=\left|\frac{G}{M}:\Oinfty\left(\frac{G}{M}\right)\right|\leq
|N/M|^{1.43}\] and \[|K:\Oinfty(K)|\leq |M|^{1.43}.\] Since
$\Oinfty(K)\subseteq \Oinfty(G)$, it follows that
\[|G:\Oinfty(G)|\leq |G:\Oinfty(K)|=|G:K||K:\Oinfty(K)|\leq
|N/M|^{1.43}|M|^{1.43}=|N|^{1.43},\] which proves the proposition.

\medskip

Let $N=S_1\times\cdots\times S_k$, a direct product of copies of a
nonabelian simple group $S$. Then $G$ permutes the subgroups $S_1$,
$S_2$,..., $S_k$ by conjugation. Let $H$ denote the kernel of this
action. Then $H$ has an embedding into $\Aut(S_1)\times\cdots\times
\Aut(S_k)$ and $G/H$ has an embedding into the symmetric group
$\Sy_k$.

\medskip

Step 3: $|G|\leq (\sqrt{24}|\Aut(S)|)^k$.

We note that $N\subseteq H$ and recall that $G/N$ is solvable. Thus
$G/H$ is solvable as well. So $G/H$ is a solvable permutation group
of degree $k$ and therefore
\[|G/H|\leq 24^{(k-1)/3}\leq \sqrt[3]{24}^k\] by
Lemma~\ref{lemmaMaroti}. On the other hand, as $H$ is embedded into
$\Aut(S_1)\times\cdots\times \Aut(S_k)$, we have
\[|H|\leq |\Aut(S)|^k.\]
Now we deduce that
\[|G|=|G/H||H|\leq (\sqrt[3]{24}|\Aut(S)|)^k.\]

\medskip

Step 4: $|G|\leq |N|^{1.43}$.

It suffices to show that $|\Out(S)|\leq |S|^{0.43}/\sqrt[3]{24}$ for
every nonabelian simple group $S$. If $S=\Al_5$, $\PSL_2(7)$, or
$\Al_6$ then we can check the inequality directly. So we may assume
that $|S|\geq 504$. Then $\sqrt[3]{24}\leq |S|^{0.171}$ and hence it
suffices to show that $|\Out(S)|\leq |S|^{0.259}$ for every
nonabelian simple group $S$. But this is done by a routine check on
the list of simple groups together with their outer automorphism
groups, see~\cite{Gorenstein} for instance.
\end{proof}

Now we can prove Theorem~B.

\begin{proof}[Proof of Theorem~B] We argue by induction on the order of $G$. Since $\rat(G/\Oinfty(G))\leq
\rat(G)$ by Lemma~\ref{obvious lemma}, we may assume that
$\Oinfty(G)=1$. We want to show that $|G|\leq \rat(G)^{21}$.

If $G$ is trivial then we are done, so assume that $G$ is
nontrivial. Then $G$ has a nonabelian minimal normal subgroup $N$.
We note that the groups $\PSL_2(q)$ are not the direct factors of
$N$ since they are not the composition factors of $G$. Let $M\unlhd
G$ such that
\[\frac{M}{N}=\Oinfty(G/N).\]
As $\Oinfty(M)$ is characteristic in $M$ and $M\unlhd G$, we see
that $\Oinfty(M)\unlhd G$. Thus $\Oinfty(M)=1$ since $G$ has trivial
solvable radical. We note that $M/N$ is solvable. Therefore,
Proposition~\ref{lemma Gluck} implies that
\[|M|=|M:\Oinfty(M)|\leq |N|^{1.43}.\]

Using induction hypothesis for $G/N$, we have
\[|G:M|=|G/N:\Oinfty(G/N)|\leq \rat(G/N)^{21}.\] It follows that
\[|G|\leq |M|\rat(G/N)^{21}\leq |N|^{1.43}\cdot\rat(G/N)^{21}.\]
Together with Proposition~\ref{proposition solvable radical}, we
deduce that
\[|G|\leq \rat(G)^{21},\] which completes the proof.
\end{proof}

%%%%%%%%%%%%%%%%%%%%%%%%%%%%%%%%%%%%%%%%%%%%%%%%%%%%%%%%%%%%%%%%%%%%%%%%

%%%%%%%%%%%%%%%%%%%%%%%%%%%%%%%%%%%%%%%%%%%%%%%%%%%%%%%%%%%%%%%%%%%%%%%%%%

\section{Character degree ratio of the alternating groups}\label{sectionalternatinggroups}

In this section, we prove Theorem \ref{theorem-extending-character}
for the alternating groups with $n \geq 7$ (note that $\Al_5 \cong
\PSL_2(5)$ and $\Al_6 \cong \PSL_2(9)$ are excluded from the
theorem). First, however, we review some of the basic results on
characters of the symmetric and alternating groups, proofs of which
can be found in~\cite{jameskerber} for instance.

For a positive integer $n$, a partition $\lambda$ of $n$ is defined
to be a nonincreasing sequence of positive integers $(\lambda_1,
\lambda_2, \ldots, \lambda_k)$ that sum to $n$.  We adopt the \lq
\lq exponential" notation for repeated terms in a partition, so that
for instance the partition $(5, 4, 4, 4, 3, 3, 1)$ would be written
$(5, 4^3, 3^2, 1)$.  Recall that the ordinary irreducible
representations of the symmetric group $\Sy_n$ are indexed by the
partitions of $n$, and if $\lambda$ is a partition of $n$, we let
$\chi_{\lambda}$ denote the irreducible character of $\Sy_n$
corresponding to $\lambda$. For the node $(i, j)$ in row $i$ and
column $j$ of the Young diagram of $\lambda$, we let $h_{\lambda}(i,
j)$ denote the hook length of the node $(i,j)$, which is computed by
counting the number of nodes of $\lambda$ directly to the right and
directly below $(i, j)$, including $(i, j)$ itself.  We denote by
$H_{\lambda}$ the product of all the hook lengths of $\lambda$, and
we note that the hook length formula shows that \[\chi_{\lambda}(1)
= \frac{n!}{H_{\lambda}}.\]

Recall that the conjugate $\lambda'$ of the partition $\lambda$ is
the partition obtained by interchanging the rows and the columns of
$\lambda$. We say $\lambda$ is self-conjugate if $\lambda' =
\lambda$. If $\lambda$ is a partition of $n$, then $\chi_{\lambda}$
restricts irreducibly to $\Al_n$ if and only if $\chi$ is not
self-conjugate.

For $n \geq 4$, the partition $(n-1, 1)$ is certainly not
self-conjugate, and $\chi_{(n-1, 1)}(1) = n-1$.  Thus to prove
Theorem~\ref{theorem-extending-character} for the alternating
groups, it suffices to find a character $\chi$ of $\Al_n$ such that
$\chi(1) > (n!)^{1/14}(n-1)$. Note that for a non self-conjugate
partition $\lambda$ of $n$, $\chi_{\lambda}(1) > (n!)^{1/14}(n-1)$
if and only if
$$\frac{(n!)^{13/14}}{n-1} > H_{\lambda}.$$ To prove Theorem~\ref{theorem-extending-character} for the alternating groups, then,
it is enough to prove the following:

\begin{proposition}\label{14boundforS_n} Let $n \geq 7$ be an integer. Then there exists a partition $\lambda$ of $n$ that is not self-conjugate such that \[\frac{(n!)^{13/14}}{n-1} >
H_{\lambda}.\] In particular,
Theorem~\ref{theorem-extending-character} is true for the
alternating groups.
\end{proposition}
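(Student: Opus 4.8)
The plan is to produce, for each $n\ge 7$, an explicit non-self-conjugate partition $\lambda$ of $n$ whose character degree $\chi_{\lambda}(1)=n!/H_{\lambda}$ is large, and then to estimate $H_{\lambda}$ by hand. The first point is that the quantity $(n!)^{1/14}(n-1)$ to be beaten grows faster than $c^{\,n}$ for every constant $c$, whereas any partition with a bounded number of rows has degree at most exponential in $n$ (for example a hook $(a,1^{n-a})$ has degree $\binom{n-1}{n-a}\le 2^{n-1}$). So no single family of bounded height can work for all $n$, and I would split the argument into two ranges: hook partitions when $n$ is below roughly $2^{14}$, and near-staircase partitions above.

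For the lower range I would take $\lambda=(a,1^{n-a})$ with $a$ chosen as close to $n/2$ as possible subject to $\lambda$ not being self-conjugate. Since $(a,1^{n-a})$ is self-conjugate precisely when $a=(n+1)/2$ (which can happen only for odd $n$), such a choice always exists, and then a short computation with binomial coefficients together with the elementary bound $\binom{2m}{m}\ge 4^{m}/(2\sqrt m)$ gives $\chi_{\lambda}(1)\ge\frac{n-1}{n+1}\cdot\frac{2^{n-1}}{\sqrt{2n}}$. Hence $\chi_{\lambda}(1)>(n!)^{1/14}(n-1)$ will follow from the inequality
\[
(n-1)\log 2-\log(n+1)-\tfrac12\log(2n)>\tfrac1{14}\log(n!),
\]
which holds (with a little room) at $n=7$ and, because the difference of the two sides has positive derivative until $\tfrac1{14}\log n$ catches up with $\log 2$, remains valid for all $7\le n\le 2^{14}$.

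For the upper range, write $\delta_{k}:=(k,k-1,\dots,2,1)$, let $k$ be maximal with $\binom{k+1}{2}<n$, put $r:=n-\binom{k+1}{2}\ge 1$, and take $\lambda:=(k+r,\,k-1,\,k-2,\,\dots,\,2,\,1)$. Then $\lambda\vdash n$; its conjugate is $(k,k-1,\dots,2,1^{r+1})$, which differs from $\lambda$ in the first part, so $\lambda$ is not self-conjugate; and since $\delta_{k}\subseteq\lambda$ one gets $\chi_{\lambda}(1)\ge\chi_{\delta_{k}}(1)$, because adjoining a box to a Young diagram never decreases the number of standard Young tableaux (branching rule). The hook lengths of a staircase are all odd and immediately listed, giving the closed form
\[
H_{\delta_{k}}=\prod_{t=1}^{k}(2t-1)!!=\prod_{j=1}^{k}(2j-1)^{\,k+1-j}\ \le\ (2k-1)^{\,N},\qquad N:=\binom{k+1}{2},
\]
so that, using $k<\sqrt{2N}$ and $N!>(N/e)^{N}$, one has $\chi_{\delta_{k}}(1)=N!/H_{\delta_{k}}>N^{N/2}/(2\sqrt 2\,e)^{N}$, whence $\log\chi_{\lambda}(1)>\tfrac{N}{2}\log N-2.1\,N$. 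Since $r\le k+1$ forces $N=n-O(\sqrt n)$, this far exceeds $\tfrac1{14}\log(n!)+\log(n-1)$ once $n\ge 2^{14}$; indeed the staircase degree is essentially $\sqrt{n!}$, while only $(n!)^{1/14}$ is required.

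The two ranges overlap at $n=2^{14}$, so there is no gap, and $\chi_{\lambda}(1)>(n!)^{1/14}(n-1)$---equivalently $(n!)^{13/14}/(n-1)>H_{\lambda}$---for every $n\ge 7$. The only delicate point is the crossover: one must carry the two elementary estimates out precisely enough to be certain that the hook family really does reach up to the value of $n$ at which the staircase family takes over. Once that is pinned down, the remaining work---checking the non-self-conjugacy of each $\lambda$, deriving the staircase hook-product formula, and the Stirling bookkeeping in the upper range---is routine, with an enormous margin throughout since the exponent to beat is merely $1/14$.
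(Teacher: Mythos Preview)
Your proposal is correct, and the argument is sound in both ranges: the hook estimate does carry you well past $n=2^{14}$ (a direct evaluation at $n=2^{14}$ shows your function is still positive by a margin of over $1000$ in the logarithm, so even if the turning point lies a little before $2^{14}$ there is no danger), and the near-staircase construction is clean, with $\chi_{\delta_k}(1)$ of order $\sqrt{N!}$ against a target of only $(n!)^{1/14}$.

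The paper takes a different route. Instead of splitting into two families, it works throughout with a single family of ``near-square'' partitions: for each $n$ it picks the unique $m$ with $m^{2}\le n<(m+1)^{2}$ and takes $\lambda$ with $(m^{m})\le\lambda\le((m+2)^{m})$. A geometric-mean estimate on the hook lengths of the rectangle $((m+2)^{m})$ gives $H_{\lambda}<(m+1)^{(m+1)^{2}}$, and comparing this with a Stirling lower bound on $(n!)^{13/14}/(n-1)$ disposes of all $n\ge 64$ at once; the self-conjugate case $n=m^{2}$ is handled by passing to $(m+1,m^{m-2},m-1)$ via branching, and $7\le n\le 63$ is checked directly.

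The trade-offs: the paper's single family makes the asymptotic step shorter and leaves only a tiny finite range ($7\le n\le 63$) to verify, whereas your two-family approach replaces that finite check by a monotonicity argument on $[7,2^{14}]$ and gains an enormously stronger asymptotic (your staircase degree is essentially $\sqrt{n!}$, while the paper's near-square bound is closer to $n!^{1-o(1)}$ but proved more crudely). Either method works with vast room to spare, as you note, because the exponent to beat is only $1/14$.
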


We begin by proving a lower bound on $(n!)^{13/14}/(n-1)$.

\begin{lemma}\label{lowerbound} For any positive integer $n \geq 15$, we have $$\frac{(n!)^{13/14}}{n-1} > 1.35 \left( \frac{n}{e} \right)^{25n/28}.$$
\end{lemma}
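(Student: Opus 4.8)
The plan is to invoke the elementary Stirling lower bound $n! > \sqrt{2\pi n}\,(n/e)^n$, valid for every positive integer $n$, and then reduce the claim to a one-variable estimate. Raising the inequality to the power $13/14$ gives $(n!)^{13/14} > (2\pi n)^{13/28}(n/e)^{13n/14}$. Since $13/14 = 26/28$ while the target exponent is $25/28$, one factor splits off: $(n/e)^{13n/14} = (n/e)^{n/28}(n/e)^{25n/28}$. Hence it suffices to prove
\[
\frac{(2\pi n)^{13/28}}{n-1}\,\Bigl(\frac{n}{e}\Bigr)^{n/28} \;\geq\; 1.35 .
\]

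Next I would discard the shift in the denominator: since $1/(n-1) > 1/n$, it is enough to show $F(n) \geq 1.35$, where $F(n) := (2\pi)^{13/28}\, n^{-15/28}\,(n/e)^{n/28}$. Taking logarithms, $\ln F(n) = \frac{13}{28}\ln(2\pi) - \frac{15}{28}\ln n + \frac{n}{28}(\ln n - 1)$, and differentiating in the real variable $n$ gives $(\ln F)'(n) = \frac{1}{28}\bigl(\ln n - 15/n\bigr)$. This is positive as soon as $n\ln n > 15$, in particular for all $n \geq 15$; thus $F$ is increasing on $[15,\infty)$, and it remains only to check the single value $n = 15$.

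The base case is clean, because the $\ln n$ terms cancel: $\ln F(15) = \frac{13}{28}\ln(2\pi) - \frac{15}{28} = \frac{1}{28}\bigl(13\ln(2\pi) - 15\bigr) \approx 0.318$, so $F(15) \approx 1.374 > 1.35$. Combining with monotonicity, $F(n) \geq F(15) > 1.35$ for all $n \geq 15$, which is exactly the reduced inequality and hence proves the lemma.

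I do not expect a genuine obstacle here; this is a routine Stirling-plus-calculus estimate. The only points requiring care are the exponent bookkeeping (tracking $13/14$, $25/28$, and their difference $1/28$) and keeping enough numerical precision in the base case, since $F(15)\approx 1.374$ sits only modestly above the claimed constant $1.35$ and the argument would collapse if this margin were mishandled.
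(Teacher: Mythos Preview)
Your proof is correct and follows essentially the same route as the paper's: Stirling's lower bound, the replacement $1/(n-1)>1/n$, and the evaluation of the constant $\bigl((2\pi)^{13}/e^{15}\bigr)^{1/28}\approx 1.374$. The only cosmetic difference is that where you differentiate to establish monotonicity of $F$, the paper observes directly that $\tfrac{13n}{14}-\tfrac{15}{28}\geq \tfrac{25n}{28}$ for $n\geq 15$, which is the same fact in disguise (and is precisely why your $\ln n$ terms cancel at $n=15$).
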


\begin{proof} By Stirling's formula, we have $$(n!)^{13/14} \geq \left (\frac{n}{e} \right)^{13n/14}(2\pi n)^{13/28}.$$
Thus
$$\frac{(n!)^{13/14}}{n-1} > \frac{\left( \frac{n}{e} \right)^{13n/14}(2\pi n)^{13/28}}{n} = \left(\frac{n}{e}\right)^{13n/14 - 15/28} \left( \frac{(2\pi)^{13}}{e^{15}} \right) ^{1/28}.$$

Note that $\left( \frac{(2\pi)^{13}}{e^{15}} \right) ^{1/28} >
1.35$.  Moreover, since $n \geq 15$, we have $\frac{13n}{14} -
\frac{15}{28} \geq \frac{25n}{28}$. Thus $$\frac{(n!)^{13/14}}{n-1}
> 1.35 \left(\frac{n}{e} \right)^{{25n}/{28}},$$ as claimed.
\end{proof}

We will now work to develop non self-conjugate partitions with large
degree, by finding non self-conjugate partitions with \lq \lq small"
$H_{\lambda}$.  Recall that if $\lambda$ and $\mu$ are partitions, we say that $\lambda \leq \mu$ if, for each $i$, $\lambda_i \leq \mu_i$.

\begin{definition} Let $m$ be a positive integer. Define $\Gamma_m$ to be the set of partitions $\lambda$ such that $(m^m) \leq \lambda \leq ((m+2)^m)$.
\end{definition}

Note that if $\lambda \in \Gamma_m$ and $\lambda$ is a partition of
$n$, then $m^2 \leq n \leq m^2 + 2m = (m + 1)^2 - 1$. Thus for each
$n$, there is a unique value of $m$ such that there is a partition
of $n$ contained in $\Gamma_m$.  Moreover, with the exception the
partition $(m^m)$ of $n = m^2$, every partition in $\Gamma_m$ is not
self-conjugate.

\begin{lemma}\label{upperbound} Let $m$ be a positive integer and suppose $\lambda \in \Gamma_m$. Then $H_{\lambda} < (m + 1)^{(m+1)^2}$.
\end{lemma}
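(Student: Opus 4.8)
The plan is to bound $H_\lambda$ for $\lambda \in \Gamma_m$ by a direct estimate on the hook lengths. Since $\lambda$ is squeezed between $(m^m)$ and $((m+2)^m)$, the Young diagram of $\lambda$ has exactly $m$ rows, each of length between $m$ and $m+2$. The first step is to observe that the largest possible hook length of any node in such a diagram is bounded by (number of columns to the right) plus (number of rows below) plus $1$, which is at most $(m+2-1) + (m-1) + 1 = 2m+1$. So every one of the $n \leq m^2 + 2m < (m+1)^2$ hook lengths is at most $2m+1$.

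The second step is to count the nodes: $\lambda$ is a partition of $n$ with $m^2 \leq n \leq (m+1)^2 - 1$, so $H_\lambda$ is a product of fewer than $(m+1)^2$ positive integers, each at most $2m+1$. This gives the crude bound $H_\lambda < (2m+1)^{(m+1)^2}$. This is already of the right shape but the base $2m+1$ is a bit too large; I want base $m+1$. To fix this, I would be more careful about how many nodes actually have large hook length. In a rectangular-ish diagram with $m$ rows and roughly $m$ columns, the hook length of node $(i,j)$ is $(\lambda_i - j) + (\lambda'_j - i) + 1$, which for the bulk of nodes is comparable to $m$, not $2m$; only the top-left corner achieves something near $2m$. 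So I would split the product: estimate a first block of hook lengths (say those in the first row and first column, about $2m$ of them) crudely by $2m+1$ each, and bound the remaining at most $(m+1)^2 - 2m = m^2 + 1$ nodes — which form essentially the diagram of a partition of at most $m^2$ with at most $m-1$ rows and $m+1$ columns — so each of their hook lengths is at most $(m+1) + (m-1) - 1 = 2m - 1$, and recurse or just iterate this peeling.

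Actually the cleanest route, and the one I would write up, is to bound $H_\lambda$ by the hook-length product of the largest diagram in $\Gamma_m$, namely $\mu = ((m+2)^m)$, since enlarging a Young diagram only increases the multiset of hook lengths in a way that increases the product — or more safely, since $H_\lambda = n!/\chi_\lambda(1) \leq n!$ and $n < (m+1)^2$, combined with $n! < n^n < (m+1)^{2(m+1)^2}$, which is slightly too weak, so I'd instead use the rectangle $(m^m)$ with $n = m^2$: its hooks are well understood and one computes $H_{(m^m)} = \prod$ of hooks; for a general $\lambda \in \Gamma_m$ adding up to two extra cells per row only multiplies in at most $2m$ more hook factors each $< 2m+1 < (m+1)^2$, and recomputes. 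The main obstacle is getting the base of the exponential down from the naive $2m+1$ to $m+1$: this requires exploiting that the average hook length in these near-square diagrams is around $m$, not $2m$. I expect to handle it by partitioning the $n$ nodes according to which "$L$-shaped rim hook" they lie on and summing a geometric-type estimate, or simply by the monotonicity $H_\lambda \leq H_{((m+2)^m)}$ followed by an explicit product formula for the $m \times (m+2)$ rectangle, whose hook lengths are all at most $2m+1$ but whose product, after cancellation against the factorials in the hook-length formula, comes out below $(m+1)^{(m+1)^2}$.
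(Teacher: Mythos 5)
Your reduction step coincides with the paper's: observe that $H$ is monotone under containment of Young diagrams (adding a cell only raises existing hook lengths and introduces a new factor $\geq 1$), so $H_\lambda \leq H_{((m+2)^m)}$, and it remains to estimate the hook product of the $m \times (m+2)$ rectangle. You also correctly identify the crux of the difficulty — the naive bound $(2m+1)^{(m+1)^2}$ has the wrong base, and what must be exploited is that the \emph{average} hook length in this near-square shape is about $m$, not $2m$. But that is where the proposal stops being a proof: the decisive inequality $H_{((m+2)^m)} < (m+1)^{(m+1)^2}$ is never established. You write that the product ``after cancellation against the factorials in the hook-length formula, comes out below $(m+1)^{(m+1)^2}$,'' and that you ``expect to handle it'' by a rim-hook partition or a geometric-type estimate — these are statements of intent, not arguments, and the several alternative routes you float along the way (the $n! < n^n$ bound, which you rightly note is too weak by a factor of $2$ in the exponent; peeling off the first row and column and recursing; starting from $(m^m)$ and tracking the extra cells) are either abandoned or would require exactly the estimate you are deferring.

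The missing ingredient in the paper's proof is a double application of the AM--GM inequality. The hook lengths in column $j$ of $((m+2)^m)$ are $m$ consecutive integers with mean $\frac{3m+5-2j}{2}$, so their product is at most $\left(\frac{3m+5-2j}{2}\right)^m$; taking the product over $j = 1, \dots, m+2$ and applying AM--GM again to the $m+2$ arithmetic-progression factors $\frac{m+1}{2}, \frac{m+3}{2}, \dots, \frac{3m+3}{2}$ (whose mean is exactly $m+1$) gives $H_{((m+2)^m)} \leq (m+1)^{m(m+2)} < (m+1)^{(m+1)^2}$. This is precisely the device that drives the base down from $2m+1$ to $m+1$, and it is the piece your proposal would need to supply to be complete.
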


\begin{proof} Note that if $\lambda \in \Gamma_m$, then $\lambda$ is properly contained in the partition $\mu_{(m)} = ((m+2)^m)$. Thus $H_{\lambda} < H_{\mu_{(m)}}$.

Observe that the hook lengths in the first column of $\mu_{(m)}$ are
$(2m+1, 2m, \ldots, m + 2)$, and thus by the geometric mean
inequality we have the product of the hook lengths in the first
column of $\mu_{(m)}$ is $$\prod_{i=2}^{m+1} (m + i) \leq \left( \frac{(2m+1) +
(2m) + \cdots + (m+2)}{m} \right) ^{m}  = \left( \frac{3m + 3}{2}
\right)^{m}.$$

Similarly the product of hook lengths in column $j$ of $\mu_{(m)}$, for $1 \leq j
\leq m + 2$, is bounded above by $$\left( \frac{3m + 5 - 2j}{2}
\right)^{m}.$$   Thus $$H_{\lambda} < \left[ \left( \frac{3m + 3}{2}
\right) \cdots\left( \frac{m+1}{2} \right)\right]^{m} \leq (m +
1)^{m^2 + 2m} < (m+1)^{(m+1)^2},$$ where we are again using the
geometric mean inequality, and we are done.
\end{proof}

\begin{lemma} Suppose $n \geq 64$. If $\lambda \in \Gamma_m$ is a partition of $n$ (thus $m \geq 8)$ then $$\frac{(n!)^{13/14}}{n-1} \geq H_{\lambda}.$$
\end{lemma}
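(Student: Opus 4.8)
The plan is to feed the lower bound of Lemma~\ref{lowerbound} and the upper bound of Lemma~\ref{upperbound} into each other and reduce the statement to an elementary inequality in the single integer variable $m$. Since $\lambda\in\Gamma_m$ is a partition of $n$ we have $m^2\le n\le (m+1)^2-1$ (as observed just after the definition of $\Gamma_m$), so from $n\ge 64$ we get $\sqrt{n}\ge 8$ and hence $m>\sqrt{n}-1\ge 7$, i.e.\ $m\ge 8$. Because $n\ge 64\ge 15$, Lemma~\ref{lowerbound} gives
\[
\frac{(n!)^{13/14}}{n-1}>1.35\left(\frac{n}{e}\right)^{25n/28},
\]
and since the function $t\mapsto (t/e)^{25t/28}$ is increasing for $t\ge 1$ (its logarithm $\tfrac{25t}{28}(\log t-1)$ has derivative $\tfrac{25}{28}\log t>0$) and $n\ge m^2$, the right-hand side is at least $1.35\,(m^2/e)^{25m^2/28}$. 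Meanwhile Lemma~\ref{upperbound} gives $H_{\lambda}<(m+1)^{(m+1)^2}$. Thus it suffices to prove that
\[
1.35\left(\frac{m^2}{e}\right)^{25m^2/28}\ \ge\ (m+1)^{(m+1)^2}\qquad\text{for every integer }m\ge 8.
\]

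To bound the right-hand side I would use the crude estimate $m+1\le\tfrac98 m$, which holds precisely for $m\ge 8$; then $(m+1)^2\le\tfrac{81}{64}m^2$ and therefore $(m+1)^{(m+1)^2}\le\bigl(\tfrac98 m\bigr)^{81m^2/64}$. Taking natural logarithms, the required inequality follows once
\[
\log(1.35)+\frac{25m^2}{28}(2\log m-1)\ \ge\ \frac{81m^2}{64}\Bigl(\log m+\log\tfrac98\Bigr),
\]
and dividing by $m^2$ and collecting terms this becomes
\[
\left(\frac{25}{14}-\frac{81}{64}\right)\log m\ \ge\ \frac{25}{28}+\frac{81}{64}\log\tfrac98-\frac{\log(1.35)}{m^2}.
\]
The coefficient on the left is $\tfrac{25}{14}-\tfrac{81}{64}=\tfrac{233}{448}>0.52$, so the left-hand side is increasing in $m$ and already exceeds $0.52\log 8>1.08$ at $m=8$; the right-hand side is at most $\tfrac{25}{28}+\tfrac{81}{64}\log\tfrac98<1.042$ because $\log(1.35)>0$ makes the subtracted term positive. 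Hence the inequality holds for all $m\ge 8$, which proves the proposition.

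I do not anticipate a genuine obstacle: once Lemmas~\ref{lowerbound} and~\ref{upperbound} are in hand the argument is essentially bookkeeping. The two points that need a little care are (i) justifying the replacement of $n$ by its minimal value $m^2$, which is legitimate precisely because the bound $(m+1)^{(m+1)^2}$ is independent of $n$ while the lower bound is monotone in $n$, and (ii) checking the numerical constants in the final scalar inequality. Note that no special treatment of the self-conjugate partition $(m^m)$ of $n=m^2$ is needed: although that partition is irrelevant to the eventual application, the inequality $(n!)^{13/14}/(n-1)\ge H_{\lambda}$ is still valid there. The crude estimate $m+1\le\tfrac98 m$ is comfortably strong enough because Stirling's formula produces the exponent $25/14\approx 1.79$ on the left, which beats the exponent $81/64\approx 1.27$ that this estimate contributes on the right.
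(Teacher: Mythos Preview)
Your proof is correct and follows essentially the same approach as the paper: both arguments combine Lemma~\ref{lowerbound} with Lemma~\ref{upperbound} and reduce to an elementary inequality via the estimate $m+1\le\tfrac{9}{8}m$ (equivalently $(m+1)^2\le\tfrac{81}{64}m^2$) valid for $m\ge 8$. The only cosmetic difference is that the paper keeps $n$ as the running variable---bounding $H_\lambda\le(\tfrac{81}{64}n)^{81n/128}$ and then comparing with $(n/e)^{25n/28}$ to obtain an inequality valid for all $n\ge 55$---whereas you first use monotonicity of $(t/e)^{25t/28}$ to replace $n$ by its minimum $m^2$ and then work entirely in $m$; the numerics are effectively the same.
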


\begin{proof} By Lemma \ref{lowerbound} we have $$\frac{(n!)^{13/14}}{n-1} > 1.35 \left( \frac{n}{e}
\right)^{25n/28} > \left( \frac{n}{e} \right)^{25n/28}.$$ By Lemma
\ref{upperbound} we have $H_{\lambda} < (m + 1)^{(m+1)^2}$. Note
that since $m \geq 8$, then we have $\frac{81}{64}n \geq
\frac{81}{64}m^2 \geq (m+1)^2$. Therefore $$H_{\lambda} \leq
(m+1)^{(m+1)^2} \leq \left[\left(\frac{81}{64}
n\right)^{{1}/{2}}\right]^{(m+1)^2} \leq \left[\left(\frac{81}{64}
n\right)^{{1}/{2}}\right]^{{81n}/{64}} =  \left( \frac{81}{64} n
\right)^{{81n}/{128}}.$$

Thus it is enough to show that $\left( \frac{81}{64} n
\right)^{{81n}/{128}} \leq \left( \frac{n}{e} \right)^{25n/28}$.
After taking $n$th roots of both sides, this is equivalent to
showing $\left( \frac{81}{64} n \right)^{{81}/{128}} \leq \left(
\frac{n}{e} \right)^{25/28}$. However, basic algebra shows this
inequality is true for all $n \geq 55$.
\end{proof}

We are now ready to prove Proposition~\ref{14boundforS_n}, which
will prove Theorem~\ref{theorem-extending-character} for $A_n$ with
$n \geq 7$.

\begin{proof}[Proof of Proposition~\ref{14boundforS_n}] Note that the partition $(m^m) \in \Gamma_m$ is self-conjugate,
so we may not use that partition if $n = m^2$.  However, by the
branching rule (see \cite{jameskerber} for instance),
$\chi_{(m^m)}(1) = \chi_{(m^{m-1}, m-1)}(1) \leq \chi_{(m + 1,
m^{m-2}, m-1)}(1)$, and the partition $(m + 1, m^{m-2}, m-1)$ of
$m^2$ is not self-conjugate for $m > 1$.

Thus if $n \geq 64$ (i.e. $m\geq 8$), we are done by the previous
lemma. To finish the proof, we need to exhibit non self-conjugate
partitions for all $n$ such that $7 \leq n \leq 63$ that have large
enough degree.  However, this is checked as follows: for instance,
for all $n$ such that $49 \leq n \leq 63$, there is a non
self-conjugate partition $\lambda$ of $n$ which contains the
partition $(7^7)$ with degree at least $$\chi_{(7^7)}(1) \approx
4.75 \times 10^{23} > 1.07 \times 10^8 \approx (63-1)(63!)^{1/14}
\geq (n-1)(n!)^{1/14}.$$  Similar calculations can be easily done
for partitions in $\Gamma_6$, $\Gamma_5$, $\Gamma_4$, and
$\Gamma_3$. The partitions $(4, 2, 2)$ and $(3, 2, 2)$ of $8$ and
$7$, respectively, complete the proof.
\end{proof}

We mention here that the above method could be extended to prove
better bounds in Theorem~\ref{theorem-extending-character} for the
alternating groups $\Al_n$ in terms of powers of $n!$. However, the
best bound for the other simple groups is on the order of
$|S|^{1/14}$, so we do not feel it necessary to improve our bound
for the alternating groups.
%%%%%%%%%%%%%%%%%%%%%%%%%%%%%%%%%%%%%%%%%%%%%%%%%%%%%%%%%%%%%%%%%%%%%%%%%%%%

\section{Character degree ratio of the simple groups of Lie type}\label{sectiongroups of Lie type}

We now prove Theorem~\ref{theorem-extending-character} for the
simple groups of Lie type and then finish off the proof of the
theorem. We will choose the larger character $\alpha$ to be the
so-called \emph{Steinberg character} and the smaller character
$\beta$ usually to be the smallest unipotent character. We mainly
follow the notation in~\cite{Carter} for finite groups of Lie type
and their characters, especially unipotent characters.

We make essential use of the Lusztig's classification of unipotent
characters of finite groups of Lie type. We summarize some main
points here very briefly. Let $\mathcal{G}$ be a simple algebraic
group over $\overline{\FF}_q$ and $F$ a Frobenius endomorphism on
$\mathcal{G}$. Set $G:=\mathcal{G}^F$. Let $\mathcal{G}^*$ be the
dual group of $\mathcal{G}$, $F^\ast$ the dual Frobenius
endomorphism, as defined in~\cite[\S13.10]{Digne-Michel} and denote
$G^\ast:={\mathcal{G}^\ast}^{F^\ast}$. Lusztig's classification (see
Chapter 13 of \cite{Digne-Michel}) states that the set of
irreducible complex characters of $G$ is partitioned into Lusztig
series $\mathcal{E}(G,(s))$ associated to various conjugacy classes
$(s)$ of semisimple elements of $G^\ast$. In fact,
$\mathcal{E}(G,(s))$ consists of irreducible constituents of the
Deligne-Lusztig character $R_\mathcal{T}^\mathcal{G}(\theta)$, where
$(\mathcal{T},\theta)$ is of the geometric conjugacy class
associated to $(s)$. The characters in $\mathcal{E}(G,(1))$ are
called \emph{unipotent characters} of $G$.

By the results of Lusztig, unipotent characters of isogenous groups
have the same degrees. These degrees are polynomials in $q$ and they
are essentially products of cyclotomic polynomials evaluated at $q$.
The unipotent characters of the linear and unitary groups are
labeled by partitions while those of the symplectic and orthogonal
groups are labeled by the so-called symbols. For finite classical
groups, the degrees of unipotent characters are given by some
delicate combinatorial formulas. On the other hand, the degrees of
unipotent characters of exceptional groups have been computed
explicitly, see~\cite[\S 13.8]{Carter} for more details.

We need the following result, due to Lusztig, that appears as
Theorem~2.5 of~\cite{Malle}.

\begin{lemma}[Lusztig~\cite{Lusztig}]\label{lemma Malle 1} Let $\chi$ be a unipotent character of a simple group
of Lie type $S$. Then $\chi$ is $\Aut(S)$-invariant, except in the
following cases:
\begin{enumerate}
\item[(i)] In $S=\PO_{2n}^+(q)$ with even $n\geq4$, the graph
automorphism of order $2$ interchanges the two unipotent characters
in all pairs labeled by the same degenerate symbols of defect $0$
and rank $n$.

\item[(ii)] In $S=\PO_8^+(q)$, the graph automorphism of order $3$ has
two orbits with characters labeled by the symbols
$$\left\{{2\choose 2}, {2\choose 2}', {0\hspace{6pt} 1 \choose 1\hspace{6pt} 4}\right\} \text{ and } \left\{{1\hspace{6pt}2\choose 1\hspace{6pt}2}, {1\hspace{6pt}2\choose 1\hspace{6pt}2}',
{0\hspace{6pt} 1 \hspace{6pt} 4\choose
1\hspace{6pt}2\hspace{6pt}4}\right\}.$$

\item[(iii)] In $S=\Sp_4(2^{2f+1})$, the graph
automorphism of order $2$ interchanges the two unipotent characters
labeled by the symbols
$${1\hspace{6pt} 2 \choose 0} \text{ and } {0\hspace{6pt} 1 \choose 2}.$$

\item[(iv)] In $S=G_2(3^{2f+1})$, the graph
automorphism of order $2$ interchanges the two unipotent characters
denoted by $\phi'_{1,3}$ and $\phi''_{1,3}$.

\item[(v)] In $S=F_4(2^{2f+1})$, the graph automorphism of order $2$ has
eight orbits of length $2$ with characters denoted by
$$\{\phi'_{8,3},\phi''_{8,3}\}, \{\phi'_{8,9},\phi''_{8,9}\}, \{\phi'_{2,4},\phi''_{2,4}\}, \{\phi'_{2,16},\phi''_{2,16}\},$$
$$\{\phi'_{9,6},\phi''_{9,6}\}, \{\phi'_{1,12},\phi''_{1,12}\}, \{\phi'_{4,7},\phi''_{4,7}\}, \{B_2,\epsilon'\}, \{B_2,\epsilon''\}.$$
\end{enumerate}
\end{lemma}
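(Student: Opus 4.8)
The plan is to reduce the statement to the action of the standard generators of $\Aut(S)$ and then to carry out a type-by-type computation of the action of the graph automorphisms, which turn out to be the only source of the listed exceptions. By Steinberg's description, $\Aut(S)$ is generated by inner, diagonal, field and graph automorphisms — the latter including the \emph{exceptional} graph automorphisms occurring for $B_2$ in characteristic $2$, $G_2$ in characteristic $3$, and $F_4$ in characteristic $2$ — so it suffices to treat each kind of generator. Inner automorphisms fix $\chi$ trivially. For inner-diagonal automorphisms I would use that the unipotent characters of $S$ form precisely the Lusztig series $\mathcal{E}(S,(1))$, which is stable under inner-diagonal automorphisms; moreover each unipotent character is individually invariant under every inner-diagonal automorphism — a standard fact from Deligne--Lusztig theory, see \cite{Digne-Michel}. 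This reduces the problem to field and graph automorphisms.

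Next I would dispose of a field automorphism $\phi$, induced by a Frobenius $F_0$ with $F = F_0^{m}$; here it is convenient to write $S$, up to isogeny, as $\mathcal{G}^F$ for a simple algebraic group $\mathcal{G}$ over $\overline{\FF}_q$. Then $\phi$ fixes each $\mathcal{G}^F$-conjugacy class of $F$-stable maximal tori $\mathcal{T}\subseteq\mathcal{G}$ and acts trivially on the combinatorial data parametrizing unipotent characters, so it fixes each Deligne--Lusztig virtual character $R_{\mathcal{T}}^{\mathcal{G}}(1)$. Since every unipotent character is a rational linear combination of the $R_{\mathcal{T}}^{\mathcal{G}}(1)$ with coefficients given by Lusztig's nonabelian Fourier transform — a combinatorial, $q$-independent matrix — each unipotent character is fixed by $\phi$.

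The decisive step is the analysis of a graph automorphism $\gamma$, coming from a symmetry $\rho$ of the Dynkin diagram. Such $\gamma$ acts on $\mathcal{E}(S,(1))$ through its induced action on the labeling set — partitions of $n$ in type $A_{n-1}$, symbols in types $B_n$, $C_n$, $D_n$, and the explicit lists of \cite[\S13.8]{Carter} in the exceptional types — and I would compute this action in each type, checking its compatibility with the nonabelian Fourier transform. One finds that in type $A_{n-1}$ the graph automorphism fixes every unipotent character, that in types $B_n$ and $C_n$ there is nothing to check, and that the order-$2$ graph automorphism in type $D_n$ with $n$ \emph{odd} fixes all unipotent characters as well. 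The exceptions are precisely: type $D_n$ with $n$ \emph{even}, where $\rho$ interchanges the two unipotent characters sharing each degenerate symbol of defect $0$ and rank $n$, giving (i); type $D_4$, where the triality permutes the relevant symbols as in (ii); and the exceptional graph automorphisms of $\Sp_4(2^{2f+1})$, $G_2(3^{2f+1})$ and $F_4(2^{2f+1})$, for which the nontrivial orbits (iii), (iv), (v) are read off from Lusztig's explicit computations (equivalently, from the known character tables of these small-rank groups).

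The hard part is this last step. The reduction and the cases of inner, diagonal, and field automorphisms, together with the ``generic'' graph automorphisms (types $A$, $B_n$, $C_n$, and $D_n$ with $n$ odd), follow from general Deligne--Lusztig theory; but the cases $D_n$ with $n$ even, the triality of $D_4$, and the three exceptional graph automorphisms genuinely require Lusztig's detailed determination of the Fourier transform matrices and the fine combinatorics of symbols, for which no soft argument is available. Since all of this is recorded in \cite[Theorem~2.5]{Malle} (the result being due to Lusztig \cite{Lusztig}), in the paper one would simply cite that reference.
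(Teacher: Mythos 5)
The paper does not prove this lemma at all: it is stated as a result due to Lusztig and cited as \cite[Theorem~2.5]{Malle}, with no argument given. You correctly note this in your final sentence, so your proposal agrees with the paper on the one thing the paper actually does, namely cite the reference. The outline you sketch beforehand (Steinberg's generation of $\Aut(S)$ by inner, diagonal, field and graph automorphisms; invariance under inner-diagonal automorphisms because unipotent characters lie in $\mathcal{E}(S,(1))$ and are individually fixed; field-automorphism invariance via the $q$-independence of the Fourier-transform coefficients against $R_{\mathcal{T}}^{\mathcal{G}}(1)$; and then a type-by-type analysis of ordinary and exceptional graph automorphisms on the labeling data) is a reasonable summary of how Lusztig's result is established, and the list of exceptional cases you extract matches the statement. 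But be aware that the paper offers none of this; it is purely imported, so the substance of your sketch is extra material rather than a reconstruction of anything in the paper, and the genuinely hard combinatorial verifications for $D_n$ with $n$ even, $D_4$ triality, and the three exceptional isogenies would still need to be taken from Lusztig rather than redone.
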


Malle has observed in~\cite[Theorem~2.4]{Malle} a very important
property of unipotent characters.

\begin{lemma}[Malle~\cite{Malle}]\label{lemma Malle 2} Let $S$ be a simple group of Lie type. Then every
unipotent character of $S$ extends to its inertial group in
$\Aut(S)$.
\end{lemma}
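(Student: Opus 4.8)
The plan is to exploit the standard structure of $\Aut(S)$ for a simple group of Lie type $S=\mathcal{G}^F$: every automorphism is a product of an inner‑diagonal automorphism, a field automorphism, and a graph automorphism, and $\Out(S)$ is generated by the images of these. Fix a unipotent character $\chi$ and let $T:=I_{\Aut(S)}(\chi)$ be its inertia group; I will build an extension of $\chi$ to $T$ by extending successively over the inner‑diagonal, field, and graph parts. A useful preliminary reduction: by the Sylow detection of the cohomological obstruction, $\chi$ extends to $T$ as soon as it extends to the preimage in $T$ of each Sylow subgroup of $T/S$, so it suffices to extend along chains with cyclic successive quotients and to deal by hand with the few genuinely non‑cyclic pieces of $\Out(S)$ (the Klein‑four diagonal group in type $D_n$ and the triality $S_3$ of $D_4$).

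\emph{Inner‑diagonal automorphisms.} Pass to a regular embedding $\mathcal{G}\hookrightarrow\widetilde{\mathcal{G}}$; then $\widetilde{G}=\widetilde{\mathcal G}^F$ induces precisely the inner‑diagonal automorphisms of $S$. Unipotent characters have trivial central character, and the parametrizations of unipotent characters of $G$ and of $\widetilde G$ coincide, so by Clifford theory the restriction to $G$ of a unipotent character of $\widetilde G$ is irreducible. Hence $\chi$ extends to $\mathrm{Inndiag}(S)$; call this extension $\widehat\chi$.

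\emph{Field and graph automorphisms.} By Lemma~\ref{lemma Malle 1}, every unipotent character is invariant under all field automorphisms and under all graph automorphisms except those explicitly listed; if $\chi$ is one of the exceptional characters then the offending graph automorphism is not in $T$ and there is nothing further to do, so I may assume $\chi$ is graph‑invariant. For the field automorphisms, and for graph automorphisms of order $2$, the corresponding quotient of $T$ over the subgroup built so far is cyclic, so the principle that a character invariant under a cyclic quotient extends will finish those steps, \emph{provided} the extension constructed over $\mathrm{Inndiag}(S)$ can be chosen invariant under the relevant field/graph automorphisms. To secure this I would use the equivariance of Lusztig's classification: automorphisms permute the Deligne–Lusztig characters $R_{\mathcal T}^{\mathcal G}(1)$ and act on unipotent characters within a family through Lusztig's Fourier transform, whose matrix is fixed by $\Aut(S)$; realizing $\chi$ through an $F$‑ and automorphism‑stable torus or Levi (equivalently, via Shintani descent to $\mathcal{G}^{F_0}$) produces an extension that is visibly compatible with the automorphism. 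The remaining non‑cyclic configurations — the triality of $\PO_8^+(q)$ (where for odd $q$ the diagonal‑and‑graph part of $\Out(S)$ contains a copy of $S_4$), and the small cases $\Sp_4$, $G_2$, $F_4$ in the special characteristics of Lemma~\ref{lemma Malle 1} — I would settle by direct inspection of the unipotent character degrees and the tables in~\cite{Carter}, together with the Sylow reduction and the vanishing of the relevant Schur multipliers.

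The main obstacle is exactly this last step: extendibility past the non‑cyclic part of $\Out(S)$ is not formal. One must check, using the explicit Lusztig parametrization and the precise graph‑automorphism action recorded in Lemma~\ref{lemma Malle 1}, both that the extension over $\mathrm{Inndiag}(S)$ can be chosen simultaneously invariant under the graph and field automorphisms lying in $T$, and that the resulting class in $H^2(T/S,\CC^\times)$ then vanishes. The triality case in $\PO_8^+(q)$ is the most delicate instance and is where the bulk of the verification is concentrated.
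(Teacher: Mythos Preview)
The paper does not prove this lemma at all: it is stated as a citation of Malle's result \cite[Theorem~2.4]{Malle}, with no argument given. So there is no ``paper's own proof'' to compare against; the authors simply invoke Malle.

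Your sketch is broadly along the lines of Malle's actual argument in \cite{Malle} --- regular embedding to handle inner-diagonal automorphisms, then extension over the cyclic field/graph pieces, with the hard work concentrated in the non-cyclic configurations of $\Out(S)$ --- and you correctly identify where the genuine content lies. But as written it is an outline rather than a proof: the step ``realizing $\chi$ through an $F$- and automorphism-stable torus or Levi \dots\ produces an extension that is visibly compatible with the automorphism'' is precisely the assertion that needs justification, and Malle's paper devotes substantial case-by-case work (using Harish-Chandra theory, explicit generalized Gelfand--Graev characters, and Shoji's results on Shintani descent) to establish it. Your final paragraph is honest about this gap. If you are writing this up, the appropriate move is the one the paper makes: cite \cite[Theorem~2.4]{Malle} and move on, unless you intend to reproduce Malle's full analysis.
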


Combining Lemmas~\ref{lemma Malle 1} and~\ref{lemma Malle 2}, we see
that almost all of the unipotent characters of a simple group of Lie
type are extendible to the full automorphism group. The character
table of the Tits group is available in~\cite{Atl1} and we will
treat it as a sporadic simple group rather than a simple group of
Lie type.

\begin{proposition}\label{proposition groups of lietype} Theorem~\ref{theorem-extending-character} is true for the simple groups of Lie
type.
\end{proposition}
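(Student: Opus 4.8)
The plan is to use the Steinberg character as the large character $\alpha$ and the smallest unipotent character (the trivial character's ``partner'') as the small character $\beta$, and then to verify the inequality $\St(1)/\beta(1) > |S|^{1/14}$ case by case over the families of simple groups of Lie type, plus the handful of sporadic groups. The key structural input is that by Lemmas~\ref{lemma Malle 1} and~\ref{lemma Malle 2}, every unipotent character that is $\Aut(S)$-invariant extends to $\Aut(S)$; the Steinberg character is always $\Aut(S)$-invariant (its degree $|G|_p$ is characteristic and it is fixed by every automorphism), so $\alpha = \St$ always extends. For $\beta$ I would take the smallest-degree nontrivial unipotent character, and I must check in each case that the chosen $\beta$ is not one of the exceptional non-invariant characters listed in Lemma~\ref{lemma Malle 1}; if it happens to be, I replace it by the next available small unipotent character, at a negligible cost to the ratio.

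First I would recall that $\St(1) = |G|_p = q^N$ where $N$ is the number of positive roots, and that $|S|$ is (up to the bounded central factor $|Z(\mathcal{G}^F)|$, which only helps) the product $q^N \prod_i (q^{d_i}-1)$ over the degrees $d_i$ of the Weyl group. Since $\prod_i(q^{d_i}-1) < q^{\sum d_i} = q^N$ (using $\sum d_i = N + \rank$, so in fact $< q^{N+r}$), we get $|S| < q^{2N + r}$ roughly, hence $|S|^{1/14} < q^{(2N+r)/14}$. So it suffices to exhibit $\beta$ with $\beta(1) < q^N / q^{(2N+r)/14} = q^{(12N - r)/14}$, i.e.\ a unipotent character of degree a polynomial in $q$ of degree at most about $(12N-r)/14 \approx (6/7)N$. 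For the classical groups one reads off from the combinatorial degree formulas that the smallest nontrivial unipotent character has degree polynomial of degree roughly $N - (\text{something of order } \rank)$ — for $\GL_n$ it is $\chi_{(n-1,1)}$ of degree $(q^n - q)/(q-1)$, polynomial degree $n-1$, versus $N = \binom{n}{2}$; for $\Sp_{2n}$, $\SO$ one gets a character of polynomial degree on the order of $n$ or $2n$ against $N \sim n^2$. In every classical family the ratio of these polynomial degrees is bounded away from $1$ (indeed tends to a constant like $N/(\text{small})$), so for $q$ large the bound $|S|^{1/14}$ is easily beaten, and the finitely many small $(q, \text{rank})$ exceptions are checked directly from \cite{Atl1} or \cite{Carter}.

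The main obstacle is precisely the bookkeeping for small rank and small $q$: the estimate ``polynomial degree of $\beta$ is at most $(6/7)N$'' is asymptotic in $q$, and one must separately handle the low-rank classical groups ($\PSL_3, \PSL_4, \PSp_4, \PSp_6, \PSU_3, \ldots$) and the exceptional groups $G_2, {}^3D_4, F_4, E_6, {}^2E_6, E_7, E_8$, as well as Suzuki and Ree groups ${}^2B_2, {}^2G_2, {}^2F_4$, where the explicit degree lists in \cite[\S13.8,\S13.9]{Carter} must be consulted. For the exceptional groups this is a finite check: in each the ratio $\St(1)/\beta(1)$ is an explicit polynomial in $q$ and one verifies it exceeds $|S|^{1/14}$; the cases needing genuine care are things like $G_2(2)$, $\PSp_4(3)$, $\PSU_4(2)$ and the very small Lie type groups, which are either excluded (being $\PSL_2$), handled via the ATLAS, or absorbed into the sporadic case. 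Finally I would dispose of the sporadic groups and the Tits group ${}^2F_4(2)'$ by direct inspection of their character tables in \cite{Atl1}, choosing $\alpha, \beta$ to be any two characters known to extend to $\Aut(S)$ (e.g.\ rational-valued ones, or using that $\Out(S)$ has order at most $2$ so most characters extend) with ratio exceeding $|S|^{1/14}$, which is trivial since $|S|^{1/14}$ is tiny for all sporadic groups. Combining this proposition with Proposition~\ref{14boundforS_n} completes the proof of Theorem~\ref{theorem-extending-character}.
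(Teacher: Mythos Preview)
Your approach is essentially the same as the paper's: take $\alpha=\St_S$ and $\beta$ a small nontrivial unipotent character (the one labeled $(n-1,1)$ for types $A$ and ${}^2A$, and analogous minimal-degree unipotent characters for the other classical and exceptional types), invoke Lemmas~\ref{lemma Malle 1} and~\ref{lemma Malle 2} to get extendibility, and then verify the inequality $\St_S(1)/\beta(1)>|S|^{1/14}$ family by family. The paper simply carries out the explicit case-by-case computation (with a short table for the exceptional types) rather than your asymptotic polynomial-degree heuristic followed by a finite check; note also that the sporadic groups and the Tits group are handled in the paper outside this proposition, so you need not treat them here.
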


\begin{proof} For each family of simple groups of Lie type $S$, we will
create two unipotent characters that are extendible to $\Aut(S)$ and
their degree ratio is greater than $|S|^{1/14}$. These characters of
course cannot be those singled out in Lemma~\ref{lemma Malle 1}.
Indeed, the character of larger degree will be chosen to be the
Steinberg character of $S$, denoted by $\St_S$. Assuming that $p$ is
the defining characteristic of $S$, then it is well-known that
$\St_S$ is extendible to $\Aut(S)$ and $\St_S(1)=|S|_p$,
see~\cite{Feit}.

\medskip

1) Linear groups $\PSL_n(q)$ and unitary groups $\PSU_n(q)$ in
dimension $n\geq3$: The unipotent characters of $S$ are labeled by
partitions of $n$, see~\cite[p. 465]{Carter}. We consider the
character $\chi^\lambda$ labeled by the particular partition
$\lambda=(n-1, 1)$. This character is extendible to $\Aut(S)$ by
Lemmas~\ref{lemma Malle 1} and~\ref{lemma Malle 2} and has degree
$$\chi^\lambda(1)=\frac{q^n-q}{q-1} \text{ if } S=\PSL_n(q)$$ and $$\chi^\lambda(1)=\frac{q^n+q(-1)^n}{q+1} \text{ if } S=\PSU_n(q).$$
We note that $\St_S(1)=q^{n(n-1)/2}$ and it is easy to check that
$\St_S(1)/\chi^\lambda(1)>|S|^{1/14}$ when $n=3$. So let us assume
that $n\geq 4$. We have
\[\frac{\St_S(1)}{\chi^\lambda(1)}\geq \frac{q^{n(n-1)/2}}{(q^n-q)/(q-1)}>\frac{q^{n(n-1)/2}}{q^n}=q^{n(n-3)/2}.\]
On the other hand, we see that $|\PSL_n(q)|<|\PSU_n(q)|<q^{n^2-1}$.
Since it is clear that $q^{n(n-3)/2}>q^{(n^2-1)/14}$ for every
$n\geq4$, we conclude that
\[\frac{\St_S(1)}{\chi^\lambda(1)}>|S|^{1/14}.\]

\medskip

2) Symplectic groups $\PSp_{2n}(q)$ and orthogonal groups
$\Omega_{2n+1}(q)$ with $n\geq2$: According to \cite[p.
466]{Carter}, the unipotent characters of these groups are labeled
by symbols of the form
$${\lambda \choose \mu}={\lambda_1\hspace{6pt}\lambda_2\hspace{6pt}\lambda_3\hspace{6pt}
...\hspace{6pt}\lambda_a\choose\mu_1\hspace{6pt}\mu_2\hspace{6pt}...\hspace{6pt}\mu_b}$$
where $0\leq\lambda_1<\lambda_2<... < \lambda_a$,
$0\leq\mu_1<\mu_2<...<\mu_b$, $a-b$ is odd and positive,
$(\lambda_1, \mu_1)\neq (0,0)$, and
$$\sum_i\lambda_i+\sum_j\mu_j-\left(\frac{a+b-1}{2}\right)^2=n.$$ Now the character labeled by the symbol ${\lambda\choose\mu}={0\hspace{6pt} 1 \hspace{6pt} n
\choose-}$ is extendible to $\Aut(S)$ and its degree is
$$\chi^{\lambda\choose\mu}(1)=\frac{(q^n-1)(q^n-q)}{2(q+1)}<q^{2n}.$$
We note that $\St_S(1)=q^{n^2}$ and it follows that
\[\frac{\St_S(1)}{\chi^{\lambda\choose\mu}(1)}>\frac{q^{n^2}}{q^{2n}}=q^{n^2-2n}.\]
As $|\PSp_{2n}(q)|=|\Omega_{2n+1}(q)|\leq q^{2n^2+n}$, the
proposition follows in this case.

\medskip

3) Orthogonal groups $\mathrm{P}\Omega^+_{2n}(q)$ with $2n\geq 8$:
By~\cite[p. 471]{Carter}, the unipotent characters of these groups
are labeled by symbols of the same form as the symplectic groups
except that $a-b\equiv0(\bmod$ $4)$ and
$$\sum_i\lambda_i+\sum_j\mu_j-\left[\left(\frac{a+b-1}{2}\right)^2\right]=n.$$ We consider the character labeled by the symbol ${\lambda\choose\mu}={n-1 \choose1}$. This character is
$\Aut(S)$-invariant by Lemma~\ref{lemma Malle 1} and therefore
extendible to $\Aut(S)$ by Lemma~\ref{lemma Malle 2}. Its degree is
$$\chi^{\lambda\choose\mu}(1)=\frac{(q^n-1)(q^n+q)}{q^2-1}<q^{2n}.$$ Since
$\St_S(1)=q^{n(n-1)}$ and $|\mathrm{P}\Omega^+_{2n}(q)|<
q^{2n^2-n}$, we are done in this case.

\begin{table}[h]
\caption{Unipotent characters of simple exceptional groups of Lie
type.}\label{table}
\begin{tabular}{lll}
\hline
 Group & Characters  & Degrees\\ \hline

$\ta B_2(q)$&$\ta B_2[a]$  & $(q-1)\sqrt{q/2}$\\
&$\St$  & $q^{2}$\medskip\\

 $\tb D_4(q)$&$\phi'_{1,3}$  & $q\Phi_{12}$\\
&$\St$  & $q^{12}$\medskip\\

$G_2(q)$&$\phi_{2,1}$  & $q\Phi_{2}^2\Phi_3/6$\\
&$\St$  & $q^{6}$\medskip\\

$\ta G_2(q)$&cuspidal 1  & $(q^2-1)\sqrt{q/3}$\\
&$\St$  & $q^{3}$\medskip\\

 $F_4(q)$&$\phi_{4,1}$  & $q\Phi_{2}^2\Phi_6^2\Phi_8/2$\\
&$\St$  & $q^{24}$\medskip\\

 $\ta F_4(q)$&$\epsilon'$  & $q\Phi_{6}\Phi_{12}$\\
&$\St$  & $q^{12}$\medskip\\

 $E_6(q)$&$\phi_{6,1}$  & $q\Phi_{8}\Phi_{9}$\\
&$\St$  & $q^{36}$\medskip\\

 $\ta E_6(q)$&$\phi'_{2,4}$  & $q\Phi_{8}\Phi_{18}$\\
&$\St$  & $q^{36}$\medskip\\

$E_7(q)$&$\phi_{7,1}$  & $q\Phi_{7}\Phi_{12}\Phi_{14}$\\
&$\St$  & $q^{63}$\medskip\\

$E_8(q)$&$\phi_{8,1}$  & $q\Phi_4^2\Phi_{8}\Phi_{12}\Phi_{20}\Phi_{24}$\\
&$\St$  & $q^{120}$\medskip\\
\hline
\end{tabular}
\end{table}

4) Orthogonal groups $\mathrm{P}\Omega^-_{2n}(q)$ with $2n\geq 8$:
From~\cite[p. 475]{Carter}, we know that the unipotent characters of
these groups are labeled by symbols of the same form as the
orthogonal groups of plus type in even dimension except that $a>b$
and $a-b\equiv2(\bmod$ $4)$. We consider the character labeled by
the symbol ${\lambda\choose\mu}={1\hspace{6pt}n-1 \choose-}$. This
character has degree
$$\chi^{\lambda\choose\mu}(1)=\frac{(q^n+1)(q^n-q)}{q^2-1}<q^{2n}$$ and we
are also done since $|\mathrm{P}\Omega^-_{2n}(q)|<q^{2n^2-n}$.

\medskip

5) Exceptional groups: Since all unipotent characters of exceptional
groups are explicitly computed, it is easy to find two unipotent
characters that we need. Table~\ref{table} displays these characters
as required. In the table, we denote by $\Phi_k:=\Phi_k(q)$ the
$k$th cyclotomic polynomial evaluated at $q$,
see~\cite[\S13.9]{Carter}. We choose the bounding constant $1/14$
since for the Ree groups $\ta G_2(q)$, the ratio
$q^3/(q^2-1)\sqrt{q/3}$ is approximately equal to $\sqrt{3q}$ and
$|\ta G_2(q)|$ is approximately equal to $q^7$.
\end{proof}

Now we can complete the proof of
Theorem~\ref{theorem-extending-character}.

\begin{proof}[Proof of Theorem~\ref{theorem-extending-character}] The simple groups of Lie type have
been handled above. The alternating groups are treated in
Section~\ref{sectionalternatinggroups}. Finally, the sporadic simple
groups and the Tits group can be checked directly by
using~\cite{Atl1}.
\end{proof}

%%%%%%%%%%%%%%%%%%%%%%%%%%%%%%%%%%%%%%%%%%%%%%%%%%%%%%%%%%%%%%%%%%%%%%%%%%%%%%%%%%%%%%%%%%%

\section{Character degree ratio and solvability - Theorem~C}\label{section proof theoremC}

This section is devoted to the proof of Theorem~C. We start with an
analogue of Theorem~\ref{theorem-extending-character}.

\begin{lemma}\label{lemma groups of Lie type 16/5} Let $S$ be a simple group of Lie type different from $\PSL_2(q)$ for any prime power
$q$. Then there exist non-principal irreducible characters $\alpha$
and $\beta$ of $S$ so that both $\alpha$ and $\beta$ extend to
$\Aut(S)$ and \[\frac{\alpha(1)}{\beta(1)}\geq\frac{16}{5}.\]
\end{lemma}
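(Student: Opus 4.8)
The plan is to reuse the Steinberg-versus-small-unipotent-character strategy from the proof of Proposition~\ref{proposition groups of lietype}, only now aiming at the much weaker conclusion $\alpha(1)/\beta(1)\geq 16/5$ instead of $|S|^{1/14}$. In every family we take $\alpha=\St_S$, which is extendible to $\Aut(S)$ with $\St_S(1)=|S|_p$, and $\beta$ a small unipotent character that is $\Aut(S)$-invariant (hence extendible by Lemma~\ref{lemma Malle 2}) and not one of the exceptional pairs in Lemma~\ref{lemma Malle 1}. For the infinite families the bounds already proved are vastly stronger than $16/5$ once $q$ or the rank is not minimal, so the real content is a finite check at the bottom of each family.

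Concretely, I would proceed family by family exactly as before. For $S=\PSL_n(q)$ or $\PSU_n(q)$ with $n\geq 3$, take $\beta=\chi^{(n-1,1)}$; then $\St_S(1)/\beta(1)\geq q^{n(n-1)/2}(q-1)/(q^n\mp 1)$, and one checks this is $\geq 16/5$ for all $(n,q)$ with $n\geq 3$ and $q$ a prime power (the worst case being $n=3$, $q=2$, i.e.\ $\PSL_3(2)\cong\PSL_2(7)$, where $\St(1)=8$ and $\chi^{(2,1)}(1)=3$, giving exactly $8/3>16/5$; and $\PSU_3(2)$ is solvable so is not a simple group and need not be considered). For $\PSp_{2n}(q)$ and $\Omega_{2n+1}(q)$ with $n\geq 2$ use the symbol ${0\ 1\ n\choose -}$; for $\mathrm P\Omega^{\pm}_{2n}(q)$ with $2n\geq 8$ use the symbols ${n-1\choose 1}$ and ${1\ n-1\choose -}$ respectively; in each case $\St_S(1)\geq q^{n(n-1)}$ while $\beta(1)<q^{2n}$, so $\St_S(1)/\beta(1)>q^{n(n-1)-2n}\geq q^2\geq 4>16/5$ for $n\geq 4$, and for $n=2,3$ (symplectic/odd-orthogonal) a direct inspection of the handful of degrees settles it. For the exceptional groups I would reuse Table~\ref{table}: in each row the ratio $\St_S(1)/\beta(1)$ is a polynomial in $q$ (or $\sqrt q$) whose value is easily seen to exceed $16/5$ for every admissible $q$, the tightest cases being the small twisted groups $^2B_2(8)$, $^2G_2(27)$, $^3D_4(2)$, $G_2(2)'\cong\PSU_3(3)$, etc.; each is a single arithmetic verification. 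Finally the Tits group $^2F_4(2)'$ is handled directly from its character table in~\cite{Atl1}.

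The only mildly delicate points are bookkeeping ones: making sure that for each small group one uses a character that is genuinely $\Aut(S)$-invariant — i.e.\ avoiding the pairs listed in Lemma~\ref{lemma Malle 1}, which matters for $\mathrm P\Omega^+_{8}(q)$, $\Sp_4(2^{2f+1})$, $G_2(3^{2f+1})$ and $F_4(2^{2f+1})$, where one must check that $\St_S$ and the chosen $\beta$ lie outside those orbits — and making sure the groups excluded from the statement ($\PSL_2(q)$) or degenerate for small parameters (e.g.\ $\PSp_4(2)'\cong\Al_6\cong\PSL_2(9)$, $\PSU_3(2)$ and $^2G_2(3)'\cong\PSL_2(8)$ which are not on our list of simple groups of Lie type other than $\PSL_2$) are correctly set aside. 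I expect the main obstacle to be nothing deep, just the small-rank/small-$q$ endgame: organizing the finite list of exceptional cases so that the $16/5$ bound is transparently met in each, which is where the constant $16/5$ — attained by $\PSL_3(4)$, though that group enters the full Theorem~C rather than this lemma — comes from.
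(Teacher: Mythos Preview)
Your approach is exactly the paper's approach, but there is a genuine gap in the $\PSL_3(q)$ case. With $\alpha=\St_S$ and $\beta=\chi^{(2,1)}$ one has $\beta(1)=(q^3-q)/(q-1)=q(q+1)$ and hence
\[
\frac{\St_S(1)}{\beta(1)}=\frac{q^3}{q(q+1)}=\frac{q^2}{q+1}.
\]
For $q=4$ this is exactly $16/5$, but for $q=3$ it equals $9/4<16/5$, so your chosen pair does \emph{not} work for $\PSL_3(3)$, which is a genuine simple group of Lie type not isomorphic to any $\PSL_2(q)$. (Your computation for $q=2$ is also off: $\chi^{(2,1)}(1)=6$, not $3$, so the ratio is $4/3$, and in any case $8/3<16/5$; but that group is excluded as $\PSL_2(7)$, so this is harmless.) The paper notices this and replaces the Steinberg character for $\PSL_3(3)$ by a character of degree $39$, keeping $\beta$ of degree $12$, so that $39/12=13/4>16/5$.

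Everything else in your plan matches the paper's proof: the same unipotent $\beta$'s for the remaining classical families, Table~\ref{table} for the exceptional families, and the observation that the small-$q$ twisted groups $^2B_2$, $^2G_2$, $^3D_4$, $^2F_4$ (and the degenerate cases $\PSp_4(2)'$, $\PSU_3(2)$, $^2G_2(3)'$) are handled by direct inspection or are excluded. The only repair needed is the single group $\PSL_3(3)$.
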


\begin{proof} We will essentially use the same characters as in the proof of Theorem \ref{main theoremC}.
For the linear groups $\PSL_n(q)$ with $n \geq 4$, we have
$$\rat(S) \geq \frac{q^{{n(n-1)}/{2}}}{{(q^n - q)}{(q - 1)}} >
q^{\frac{n(n-3)}{2}} \geq q^2 > \frac{16}{5}.$$ For $\PSL_3(q)$, we
still have the same inequality as long as $q\geq 4$. For $\PSL_3(3)$
we use the characters of degrees $39$ and $12$ instead and note that
$39/12>16/5$. We remark that $\PSL_3(2)\cong\PSL_2(7)$ is excluded
from our consideration.

Similarly for the unitary groups $\PSU_n(q)$ with $n\geq4$, we have
$$\rat(S)
\geq \frac{q^{{n(n-1)}/{2}}}{(q^n+q(-1)^n)/(q+1)} \geq q^2 > 16/5.$$
On the other hand, for $\PSU_3(q)$ with $q\neq 2$, we have
$$\rat(S)\geq \frac{q^3}{q^2-q}>\frac{16}{5}.$$

For the symplectic groups $\PSp_{2n}(q)$ and orthogonal groups
$\Omega_{2n + 1}(q)$ with $n\geq2$, we have $$\rat(S) \geq
\frac{q^{n^2}}{{(q^n - 1)(q^n - q)}/{2(q + 1)}} > 2q^{n^2 - 2n + 1}
\geq 4.$$ For the orthogonal groups $\mathrm{P}\Omega^\pm_{2n}(q)$
with $n\geq 4$, we have $$\rat(S) > \frac{q^{n(n-1)}}{q^{2n}}\geq
q^{4} > 16/5.$$

For the exceptional groups of Lie type, the characters in
Table~\ref{table} show that $\rat(G) > 16/5$. This is easily checked
for the \lq \lq small" values of $q$. For the larger values, this
can be seen by recalling that $\Phi_n$ has degree
$\phi(n)$\footnote{Here $\phi(n)$ is the Euler's totient function
that counts the number of positive integers less than or equal to n
that are relatively prime to n.} and, for $n < 105$, the
coefficients of $\Phi_n$ are $\pm 1$. Thus $\Phi_n(q) \leq (\phi(n)
+ 1) q^{\phi(n)}$, which allows one to obtain the bounds $\rat(G)
> 16/5$.

The sporadic simple groups may be easily checked using~\cite{Atl1}.
Finally, for the alternating groups, it is enough to show that there
exists a character of $\Al_n$ of degree at least $16(n-1)/5$.
However, we have already shown that there exists a character degree
of $\Al_n$ at least $(n!)^{1/14} (n-1)$, and for $n \geq 11$, we
have $(n!)^{1/14}
> 16/5$.  The result can be checked for $7 \leq n \leq 10$ using
the character tables given in~\cite{Atl1,jameskerber}, and we are
done.
\end{proof}

\begin{proof}[Proof of Theorem~C] By applying Lemma~\ref{lemma groups of Lie type 16/5}, the proof goes along the same lines
as that of Theorem~A. We leave details to the reader.
\end{proof}

%%%%%%%%%%%%%%%%%%%%%%%%%%%%%%%%%%%%%%%%%%%%%%%%%%%%%%%%%%%%%%%%

\section{Examples}\label{section example} In this section we give examples to show that the index of the
Fitting subgroup need not be bounded in terms of $\rat(G)$, even if
$G$ is solvable. This is in contrast to the situation with character
degrees, where bounds on $|G : \bF(G)|$ are known in terms of the
largest character degree $b(G)$. For instance, it is shown by
Gluck~\cite{Gluck} that there is a constant $L$ such that
$|G:\bF(G)|\leq b(G)^L$ for every finite group $G$. When $G$ is
solvable, Moret\'{o} and Wolf~\cite{Moreto-Wolf} proved that
$|G:\bF(G)|\leq b(G)^3$.

Recall that $\rat(G)$ is defined to be $1$ if $G$ has at most one
nonlinear character degree. We first give an example of an infinite
family of solvable groups, each with exactly one nonlinear character
degree, such that $|G : \bF(G)|$ can be arbitrarily large.

Let $m$ be any positive integer with $m > 1$, and let $p$ be a prime
of the form $md + 1$ for some positive integer $d$ (note there are
infinitely many such primes by Dirichlet's theorem). Let $X$ be the
Frobenius group formed by the action of $\FF_p^\star$ on $F:=
\FF_p$. Let $G$ be the subgroup of $X$ containing $F$ such that $|G
: F| = m$. Then the character degrees of $G$ are precisely $1$ and
$m$, although the character degree $m$ could have large
multiplicity. Thus $\rat(G) = 1$ in this case.  As $m$ goes to
infinity, the index $|G : F|$ is arbitrarily large, thus showing
that $|G : \bF(G)|$ is arbitrarily large.

One might want an example in which $G$ has more than one nonlinear
character degree, which we now provide. To do so, we examine a
specific case of a more general example constructed in Section 3 of
\cite{noritzsch}, and more details can be found in that paper.  Let
$m = p^i$ for some prime $p$ and some positive integer $i$, and let
$n = p^i + 1$.  Let $S$ be a Sylow $p$-subgroup of $\SU_3(p^{2i})$.
Let $N$ be a maximal subgroup of $\bZ(S)$ and define $P = S/N$. Note
that $P$ is extraspecial.  Let $\lambda$ be an element of order $p^i
+ 1$ of the multiplicative group of the field of order $p^{2i}$, and
define $H$ to be the group generated by $\lambda$. Then $H$ acts
Frobeniusly on $P/\bZ(P)$ and centralizes $\bZ(P)$. Let $G$ be the
semidirect product of $H$ acting on $P$. Note that $G$ is solvable,
and it can be shown that the character degrees of $G$ are $1$,
$p^i$, and $p^i + 1$ and that $|G : \bF(G)| = p^i + 1$. Thus as $p$
or $i$ goes to infinity, we see that $\rat(G)$ goes to 1 while $|G :
\bF(G)|$ goes to infinity.

%%%%%%%%%%%%%%%%%%%%%%%%%%%%%%%%%%%%%%%%%%%%%%%%%%%%%%%%%%%%%%%%%%%%%%%%%%%%%%%%%%%%%%%%%%%%


\begin{thebibliography}{99}

\bibitem{Bianchi-Lewis}
M. Bianchi, D. Chillag, M.\,L. Lewis, and E. Pacifici, Character
degree graphs that are complete graphs, {\it Proc. Amer. Math. Soc.}
{\bf135} (2007), 671-676.

\bibitem{Carter}
  R.\,W. Carter, {\it Finite groups of Lie type. Conjugacy classes and complex
  characters}, Wiley and Sons, New York et al, 1985.

\bibitem{Atl1}
  J.H. Conway, R.T. Curtis, S.P. Norton, R.A. Parker, and R.A. Wilson,
 {\it Atlas of Finite Groups}, Clarendon Press, Oxford, 1985.


\bibitem{Digne-Michel}
  F. Digne and J. Michel, {\it Representations of finite
  groups of Lie type}, London Mathematical Society Student Texts 21,
  1991.


\bibitem{Feit}
W. Feit, Extending Steinberg characters, Linear algebraic groups and
their representations, {\it Contemp. Math.} {\bf 153} (1993), 1-9.

\bibitem{Frame-Robinson-Thrall}
 J.\,S. Frame, G.\,B. Robinson, and R.\,M. Thrall, The hook graphs of the symmetric group, {\it Canad. J. Math.} {\bf 6} (1954), 316-324.

\bibitem{Gluck}
D. Gluck, The largest irreducible character degree of a finite
group, {\it Canad. J. Math.} {\bf 37} (1985), 442-451.

\bibitem{Gorenstein}
  D. Gorenstein, R. Lyons, and R. Solomon, {\it The classifications of
  the finite simple groups}, Math. Surveys Monogr., No. 3, Amer.
  Math. Soc., Providence, 1998.

\bibitem{Isaacs1}
  I.\,M. Isaacs, {\it Character Theory of Finite Groups}, Dover
  Publications, New York, 1994.

\bibitem{Isaacs2}
I.\,M. Isaacs, Character kernels and degree ratios in finite groups,
{\it J. Algebra} {\bf322} (2009), 2220-2234.

\bibitem{jameskerber}
G. James and A. Kerber, {\it The representation theory of the
symmetric group}, Cambridge University Press, New York, 1985.

\bibitem{Lusztig}
 G. Lusztig, On the representations of reductive groups with disconnected centre.
 Orbites unipotentes et représentations. I, {\it Ast\'{e}risque} {\bf168} (1988), 157-166.

\bibitem{Malle}
G. Malle, Extensions of unipotent characters and the inductive McKay
condition, {\it J. Algebra} {\bf320} (2008), 2963-2980.

\bibitem{Maroti}
A. Mar\'{o}ti, On the orders of primitive groups, \emph{J. Algebra}
\textbf{258} (2002), 631-640.

\bibitem{Moreto-Wolf}
A. Moret\'{o} and T.\,R.~Wolf, Orbit sizes, character degrees and
Sylow subgroups {\it Adv. Math.}~{\bf184} (2004), 18-36.

\bibitem{noritzsch}
T. Noritzsch, Groups having three complex irreducible character degrees, {\it J. Algebra} {\bf175} (1995), 767-798.

\bibitem{White}
D.\,L. White, Character degrees of extensions of $\PSL_2(q)$ and
$\SL_2(q)$, {\it J. Group Theory} {\bf16} (2013), 1-33.

\end{thebibliography}
\end{document}